\newtheorem{pr}{Proposition}
\newtheorem{lema}{Lemma}
\newtheorem{theorem}{Theorem}
\newtheorem{de}{Definition}
\newcommand{\N}{\mathbb{N}}	%Atalho
\newcommand{\R}{\mathbb{R}}	%Atalho
\newcommand{\C}{\mathbb{C}} %Atalho 
\newcommand{\Le}{\mathcal{L}}%Atalho
\begin{document}

	\title[ Fractional diffusion-wave equations]{Well-posedness and regularity theory for the fractional diffusion-wave equation in Lebesgue spaces}
	
	% the velocity field of a homogeneous isotropic viscoelastic fluid in a bounded region of $\mathbb{R}^3$

	\author[B. de Andrade]{Bruno de Andrade}%\thanks{1- Universidade Federal de Sergipe - Brazil, email: bruno@mat.ufs.br.}
	\address[B. de Andrade]{Departamento de Matem\'atica, Universidade Federal de Sergipe, S\~ao Crist\'ov\~ao-SE, CEP 49100-000, Brazil.}
	\email{bruno@mat.ufs.br}
	\author[N. dos Santos]{Naldisson dos Santos}%\thanks{2- Universidade Federal de Sergipe - Brazil, email: nalddisson@gmail.com.}
	\address[N. Santos]{Departamento de Matem\'atica, Universidade Federal de Sergipe, S\~ao Crist\'ov\~ao-SE, CEP 49100-000, Brazil.}
	\email{naldisson@gmail.com}
	\date{}

	\begin{abstract}
		This paper is dedicated to the study of the semilinear fractional diffusion-wave equation. We provide estimates on the families of linear operators related to the problem in the fractional power scale associated with the Laplace operator. Furthermore, we analyze the existence and uniqueness of local mild solutions and their possible continuation to a maximal interval of existence. We also consider the issue of spatial regularity and continuous dependence with respect to initial data. \\
		
		\
		
		\noindent{\bf MSC 2010.} Primary:  35R11, 35Q35 Secondary: 35B65, 33E12\\
		
		\noindent{\bf Keywords:} Fractional diffusion-wave equation;   Spatial regularity of solutions;  Blow-up alternative.
		
	\end{abstract}

	%\tableofcontents
	
	\maketitle

	\section{Introduction} \label{intro}

In this paper we consider the  nonlinear fractional diffusion-wave equation 
\begin{eqnarray}\label{wave}	 					
	\left\{\begin{array}{lrr}
		\partial_t^{\alpha}u  = \Delta u  + f(u), \ \mbox{in}\ (0,\infty)\times\Omega,\\	
		u  =  0, \ \mbox{in} \  [0,\infty)\times \partial\Omega,\\
		u(0,x)=u_0(x),\ u_t(0,x)=u_1(x),\ \mbox{in}\ \ \Omega,\\
	\end{array} \right.	
\end{eqnarray}
%in Lebesgue spaces, and, in the $H^{1}_{0}(\Omega)$-setting, the fractional plate equation with gradient type nonlinearities 
%\begin{eqnarray*}%\label{plate}
%	\left\{\begin{array}{lrr}
	%	\partial_{t}^{\alpha}u = -\Delta^2 u + |\nabla u|^{\rho}, \ \mbox{in}\ (0,\infty)\times\Omega,\\
	%	u=\Delta u=0,\ \mbox{in} \  [0,\infty)\times \partial\Omega,\\
	%	u(0,x)=u_0(x),\ u_t(0,x)=u_1(x),\ \mbox{in}\ \ \Omega,\\
	%\end{array} \right.
	%\end{eqnarray*}
	where $\Omega$ is an open subset of  $\R^N$ with sufficiently smooth boundary $\partial\Omega$, $\alpha\in (1,2)$ and $\partial_t^{\alpha}$ is the Caputo fractional derivative. Furthermore,  $f:\mathbb{R}\to\mathbb{R}$ is a continuous function that verifies $f(0)=0$ and 
	\begin{eqnarray}\label{fc}
		|f(r)-f(s)|\leq C(|r|^{\rho-1}+|s|^{\rho-1})|r-s|,\quad \forall r,s\in\mathbb{R},
	\end{eqnarray}
	for some $C>0$ and $\rho>1$.
	We analyze the above problem in the $L^q(\Omega)$ setting, $1<q<\infty$.
This kind of evolution problem has been the subject of several research papers in the last years since this topic involves a large variety of natural sciences such as physics, chemistry, biology, engineering, and medicine, see Refs. \cite{Bouchaud,Fre, MK,MIMP, Uch} and the references therein.

An important example where these equations appear naturally is the theory of one-dimen- sion linear viscoelasticity. The basic equations of such a theory are given by			
\begin{equation}\label{basic1}
	\sigma_x(x,t)=\rho u_{tt}(x,t),
\end{equation}
\begin{equation}
	\epsilon(x,t)= u_{x}(x,t),
\end{equation}
\begin{equation}\label{basic3}
	\epsilon(x,t)= J_0\ \sigma(x,t) + \dot{J}(t)\ast \sigma(x,t),
\end{equation}
where $\rho$ is the density, $u$ denotes the displacement, $\sigma$ the stress and $\epsilon$ the strain. Furthermore, $J(t)$  represents the creep compliance, which is assumed to be a non-negative, non-decreasing function with initial value $J_0=J(0^+)\ge0$, called the instantaneous compliance. The evolution equation for the the displacement $u(x,t)$ can be derived from \eqref{basic1}-\eqref{basic3} since  that 
\begin{equation}\label{relation}
	u_{xx}=\epsilon_x=(J_0 + \dot{J}\ast)\sigma_x = (J_0 + \dot{J}\ast)\rho u_{tt}.
\end{equation}
If we consider the so-called power-type material, for which the creep compliance can be written as 
$$J(t)=\frac{1}{\rho a}\frac{t^\gamma}{\Gamma(\gamma+1)},\quad 0<\gamma\le 1,\quad t>0,$$
where $a$ is a positive constant and $\Gamma$ is the Gamma function, then from \eqref{relation} we obtain the evolution equation 
\begin{equation}\label{evo}
	\partial^{\alpha}_{t}u=a u_{xx},
\end{equation}
with $1\le \alpha=2-\gamma\le 2$, and $\partial^{\alpha}_{t}$ the time fractional derivative in Caputo's sense. Following Refs. \cite{CM,MT}, it follows that the above creep law is provided by viscoelastic models whose stress-strain relation verify
$$\sigma= \rho a \partial^{\gamma}_{t}\epsilon, \quad 0<\gamma\le 1.$$
For $\gamma=1$ we have the situation of a Newtonian fluid where $a$ represents the kinematic viscosity. In this case \eqref{evo} becomes the classical diffusion equation. In the limit case $\gamma=0$ we obtain from \eqref{evo} the classical D'Alembert wave equation with wave-front velocity $c=\sqrt{a}$. When $0<\gamma<1$, the evolution equation \eqref{evo} is called the fractional diffusion-wave equation and has been the subject of many research works, see e.g. Refs. \cite{ KKL, MK, MP} and references therein.  Particularly in Ref. \cite{MP}, the authors show that \eqref{evo} governs the propagation of stress waves in viscoelastic media which are of relevance in acoustics and seismology since their quality factor turns out to be independent of frequency. 			
			
From the mathematical point of view, the study of existence, uniqueness, continuous dependence of the solutions, the regularity theory (spatial or temporal), the analysis of asymptotic behavior, and the characterization of special solutions are quite prominent themes in the theory of fractional evolution equations, see e.g. Refs. \cite{deAnd, B1, B2, AF, AV, ACCM, F1, F2, H, KKL, MP}. The goal of this work is to study well-posedness results and spatial regularity theory to the semilinear fractional diffusion-wave equation \eqref{wave}.  As we know, an efficient approach to partial differential equations is closely connected to the concept of solutions to be adopted.  To set the sense of solution we are looking for, we formally apply the Laplace transform to \eqref{wave} and obtain 
\begin{eqnarray*}
\widehat{\partial_t^{\alpha}u}(\lambda,\cdot)=\Delta\widehat{u}(\lambda,\cdot)+ \widehat{f}(\lambda),
\end{eqnarray*}
where $\widehat{f}(\lambda)$ represent the Laplace transform of the nonlinear term $f(u)$.  The inverse Laplace transform yields
\begin{equation}\label{vp}
u(t,x)= E_{\alpha}(t)u_0(x)+S_{\alpha}(t )u_1(x)+\int_{0}^{t}R_{\alpha}(t-s)f(u(s,x))ds, 
\end{equation}
$t\ge0$ and $x\in\Omega$, where $E_{\alpha}(t)$, $S_{\alpha}(t)$ and $R_{\alpha}(t)$ are the inverse Laplace transforms of $$\lambda^{\alpha-1}(\lambda^{\alpha}-\Delta)^{-1},\quad \lambda^{\alpha-2}(\lambda^{\alpha}-\Delta)^{-1}\quad \mbox{and}\quad  (\lambda^{\alpha}-\Delta)^{-1},$$ respectively. Motivated by this argumentation and related literature, we adopt the following notion of solution.
			
\begin{de}\label{ms}
A continuous function $u: [0,\tau ] \rightarrow L^{q}(\Omega)$ that satisfies \eqref{vp} is called a mild solution to the problem \eqref{wave}.
\end{de}

%Since, in general, mild solutions belong to some space that is larger than the space in which the problem has originally been set up, we will consider the problem \eqref{wave} in the fractional power scale associated with the Laplace operator. This kind of Banach scale is very flexible and can be used as precise measures for the regularity of the solutions. 			

Consider the Laplace operator $\Delta$ with Dirichlet boundary conditions in $Y_{q}^{0}=L^q(\Omega)$, $1<q<\infty$, and domain  $Y_{q}^{1}=W^{2,q}(\Omega)\cap W_{0}^{1,q}(\Omega)$. We know that the linear problem  
$$	\left\{\begin{array}{lrr}
	\partial_t^{\alpha}u  = \Delta u, \ \mbox{in}\ (0,\infty)\times\Omega,\\	
	u  =  0, \ \mbox{in} \  [0,\infty)\times \partial\Omega,\\
	u(0,x)=u_0(x)\ \mbox{in}\ \ \Omega,\\
\end{array} \right.	
$$
is well-posed for all $\alpha\in(0,1]$ and any $1<q<\infty$. This occurs because $-\Delta$  is a sectorial operator, and hence there exists $\phi_q\in (\frac{\pi}{2}, 
\pi)$  such that $$\mathbb{S}_{\phi_q}:=\{z\in\C: |\arg(z)|\leq\phi_q, z\neq 0\}\subset\rho(\Delta).$$
Therefore,  if $\lambda\in\mathbb{S}_{\phi_q}$ then for all $\alpha\in(0,1]$ we have 
$$\lambda^{\alpha}\in\mathbb{S}_{\phi_q}\subset\rho(\Delta)\quad \mbox{and}\quad \|(\lambda^\alpha-\Delta)^{-1}\|_{\mathcal{L}(Y_{q}^{0})}\leq C|\lambda|^{-\alpha}.$$
With this property we can use Laplace transform methods and  verify that $\Delta$ generates an analytical semigroup in $L^q(\Omega)$, for any $1<q<\infty$. However, the scenario changes in the case $\alpha\in(1,2)$ since the above property no longer holds. In this context, it is not true that  
$\lambda^{\alpha}\in\mathbb{S}_{\phi_q}\subset\rho(\Delta)$, when $\lambda\in\mathbb{S}_{\phi_q}$.
Thereby, the respective linear fractional diffusion-wave equation
$$	\left\{\begin{array}{lrr}
	\partial_t^{\alpha}u  = \Delta u, \ \mbox{in}\ (0,\infty)\times\Omega,\\	
	u  =  0, \ \mbox{in} \  [0,\infty)\times \partial\Omega,\\
		u(0,x)=u_0(x),\ u_t(0,x)=u_1(x),\ \mbox{in}\ \ \Omega,\\
\end{array} \right.	
$$
is not well-posed for all $\alpha\in(1,2)$ and any $1<q<\infty$. This is compatible with the limit case $\alpha=2$, where the above problem becomes the linear wave equation, which is, in a general way,  ill-posed in $L^{q}(\Omega)$, $1\le q<\infty$. Then, it is reasonable to think that superdiffusive problems\footnote{ Derivation order $\alpha\in (1,2)$.} are likely more mathematically challenging than subdiffusive ones\footnote{ Derivation order $\alpha\in(0,1)$.}.

In Section \ref{Lest} we investigate further the linear families associated with the Laplace operator in the superdifusive context. At the core of our analysis is the smoothing effect of the families $(E_{\alpha}(t))_{t\ge0}$, $(S_{\alpha}(t))_{t\ge0}$ and $(R_{\alpha}(t))_{t\ge0}$ in the scale $\{X_{q}^{\beta}\}_{\beta\in\R}$ of fractional powers spaces associated with $-\Delta$. We prove they are bounded continuous families of linear operators in $L^q(\Omega)$, and  if $0\le\theta<\beta\le 1$, then there exists $M>0$ such that
\begin{equation*}
	\|E_{\alpha}(t)x\|_{X^{1+\theta}_{q}}\leq Mt^{-\alpha(1+\theta-\beta)}\|x\|_{X^{\beta}_{q}},\quad \|S_{\alpha}(t)x\|_{X^{1+\theta}_{q}}\leq Mt^{1-\alpha(1+\theta-\beta)}\|x\|_{X^{\beta}_{q}}
\end{equation*}
and
\begin{equation*}
	\|R_{\alpha}(t)x\|_{X^{1+\theta}_{q}}\leq Mt^{-1-\alpha(\theta-\beta)}\|x\|_{X^{\beta}_{q}},
\end{equation*}
for all $x\in X^{\beta}_{q}$ and $t>0$. 	Particularly, it follows from the above estimate that for all $t>0$, the operators $t^{\alpha\theta}E_{\alpha}(t)$ from $X^{1}_{q}$ into $X^{1+\theta}_{q}$ are bounded linear operators satisfying
$$\|t^{\alpha\theta}E_{\alpha}(t)\|_{\mathcal{L}(X^{1}_{q},X^{1+\theta}_{q})}\le M,$$
with $M>0$ independent of $t>0$, for all $0\le\theta\le 1$. Moreover, given a compact subset $J$ of $X^{1}_{q}$, we have
$$\lim_{t\to0^{+}}\sup_{x\in J}\|t^{\alpha\theta}E_{\alpha}(t)x\|_{X^{1+\theta}_{q}}=0.$$
To prove the last assertion it is sufficient to observe that the operators $$t^{\alpha\theta}E_{\alpha}(t):X^{1}_{q}\to X^{1+\theta}_{q}$$ are uniformly bounded in $t>0$,  $\lim_{t\to0^{+}}\|t^{\alpha\theta}E_{\alpha}(t)x\|_{X^{1+\theta}_{q}}=0$, for  $x\in X^{1+\theta}_{q},$
and $X^{1+\theta}_{q}$ is a dense subset of $X^{1}_{q}$. Similar properties can be proved to $(S_{\alpha}(t))_{t\ge0}$ and $(R_{\alpha}(t))_{t\ge0}$.

With this smoothing effect, given $u_0,u_1\in L^{q}(\Omega)$, we provide sufficient conditions to the existence of a unique mild solution $u\in C([0,\tau ], L^{q}(\Omega))$ to \eqref{wave},	 			which can be continued to a maximal time of existence $\tau_{max}>0$ such that
$$\limsup_{t\rightarrow \tau_{max}^{-}}\int_{\Omega}|u(t,x)|^{q}dx=+\infty,$$%\|u(t)\|_{L^q(\Omega)}
if $\tau_{max}<+\infty$. At this point, we want to highlight that the fractional powers scale has proven to be an effective tool for studying the issue of regularity. These spaces offer a precise method for measuring the spatial regularity of the solutions. Utilizing them, we demonstrate that the mild solution of \eqref{wave} exhibits an immediate regularization effect. Precisely, for all $0\le\theta<\beta$, we have that
$$u\in C((0,\tau ],X_{q}^{1+\theta}),$$
and, if $\theta>0$ then
$$\lim_{t\to0^{+}}t^{\alpha\theta}\|u(t)\|_{X_{q}^{1+\theta}}= 0.$$
Given that $X_{q}^{1+\theta}\hookrightarrow L^q(\Omega)$, it follows that $X_{q}^{1+\theta}$ is a more regular space than $L^q(\Omega)$. 
			
The structure of this work is as follows. In the next section, we rewrite the semilinear fractional diffusion-wave equation \eqref{wave} as an abstract evolution problem in a suitable scale of Banach spaces. In Section \ref{Lest}, we present a detailed analysis of the families $(E_{\alpha}(t))_{t\ge0}$, $(S_{\alpha}(t))_{t\ge0}$ and $(R_{\alpha}(t))_{t\ge0}$ in the scale of fractional powers spaces associated with the Laplace operator. Section \ref{Main} includes statements and proofs of our main results, along with a three-dimensional concrete example to illustrate their applicability.

\section{Abstract framework}			
The main goal of this section is to introduce and study spaces measuring smoothness which are also algebraically well-adapted to the problem \eqref{wave}. Remember that the Laplace operator with Dirichlet boundary conditions is a sectorial operator in $Y_{q}^{0}=L^q(\Omega)$ with domain  $Y_{q}^{1}=W^{2,q}(\Omega)\cap W_{0}^{1,q}(\Omega)$. Denoting by $\{Y_{q}^{\beta}\}_{\beta\in\R}$ the  fractional powers spaces associated with the Laplace operator, it follows that, see Ref. \cite{Amann2},
\begin{eqnarray*}
	\left\{\begin{array}{lcr}
		Y_{q}^{\beta}\hookrightarrow H_{q}^{2\beta}(\Omega), \ \ \beta\geq 0, \ \ 1<q<\infty,\\
		Y_{q}^{-\beta}=(Y_{q'}^{\beta})', \ \ \beta\geq 0, \ \ 1<q<\infty, \ \ q'=\frac{q}{q-1}.\\
	\end{array}\right.
\end{eqnarray*}
where  $H_{q}^{2\beta}(\Omega):=W^{2\beta,q}(\Omega)$ is the standard Sobolev-Slobodeckii space and $ (Y^{\beta}_{q})'$ represents the dual space of $Y^{\beta}_{q}$. Therefore, from standard duality arguments, we get
\begin{eqnarray}\label{embHeat}
	\left\{\begin{array}{lcr}
		Y_{q}^{\beta}\hookrightarrow L^r(\Omega), \ \ r\leq \frac{Nq}{N-2\beta q}, \ \ 0\leq\beta<\frac{N}{2q},\\
		Y_{q}^{0}=L^q(\Omega),\\
		Y_{q}^{\beta}\hookleftarrow L^s(\Omega), \ \ s\geq \frac{Nq}{N-2\beta q}, \ \ -\frac{N}{2q'}<\beta\leq 0.
	\end{array}\right.
\end{eqnarray}
Moreover, the realization of $L=\Delta$ in $Y_{q}^{\beta}$, denoted by $L_{\beta}$, is an isometry from $Y_{q}^{\beta+1}$ into $Y_{q}^{\beta}$ and 
$$L_{\beta}: D(L_{\beta})=E_{q}^{\beta+1}\subset E_{q}^{\beta}\rightarrow E_{q}^{\beta}$$ 
is a sector operator. In addition, $D(L_{\beta}^{\gamma})=Y_{q}^{\beta+\gamma}$.

We shall work the problem \eqref{wave} with initial data belonging to $L^q(\Omega)$. For this reason, we set $X_{q}^{\beta}:=Y_{q}^{\beta-1}$, $\beta\in\R$, and let $A_q: X_{q}^{1}\subset X_{q}^{0}\rightarrow X_{q}^{0}$ be  the operator $L_{-1}$. From \eqref{embHeat}, it follows that the fractional powers scale associated with $A_q$ satisfies 
\begin{eqnarray}\label{IS}
	\left\{\begin{array}{lrc}
		X_{q}^{\beta}\hookrightarrow L^r(\Omega), \ \ r\leq \frac{Nq}{N+2q-2\beta q}, \ \ 1\leq\beta<1+\frac{N}{2q},\\
		X_{q}^{1}=L^q(\Omega),\\
		X_{q}^{\beta}\hookleftarrow L^s(\Omega), \ \ s\geq \frac{Nq}{N+2q-2\beta q}, \ \ 1-\frac{N}{2q'}<\beta\leq 1.\\
	\end{array}\right.
\end{eqnarray}
Therefore, the abstract formulation of the problem $(\ref{wave})$ in the $L^q(\Omega)$ theory becomes  
\begin{eqnarray} \label{awave}
	\left\{\begin{array}{lrr}
		D^{\alpha}_{t}u=A_qu + f(u),\ \ t>0,\\
		u(0)=u_0, \ u'(0)=u_1,\\
	\end{array} \right.
\end{eqnarray}
%where $f(\psi)=|\psi|^{\rho-1}\psi$,  
for $\psi\in X_{q}^{1}=L^{q}(\Omega)$.
			
			\
			
We close this section proving some properties about function  $f$.

\begin{lema}\label{lemaonda}
	Consider $\beta \ge 0$ such that $1-\frac{N}{2q'}<\beta<1$ and $1<\rho\leq 1+\frac{2q}{N}(1-\beta )$, then $f:X_{q}^{1}\to X_{q}^{\beta}$ verifies 
	\begin{equation*}
		\|f(u)-f(v)\|_{X_{q}^{\beta}}\leq C (\|u\|_{X_{q}^{1}}^{\rho-1}+\|v\|_{X_{q}^{1}}^{\rho-1})\|u-v\|_{X_{q}^{1}}
	\end{equation*} 
	and
	\begin{equation*}
		\|f(u)\|_{X_{q}^{\beta}}\leq C \|u\|_{X_{q}^{1}}^{\rho},
	\end{equation*} 
	for some $C>0$ and every $u,v\in X_{q}^{1}$.
\end{lema}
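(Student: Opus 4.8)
The plan is to reduce both $X_{q}^{\beta}$-estimates to estimates in a single Lebesgue space $L^{s}(\Omega)$ and then invoke the embedding $L^{s}(\Omega)\hookrightarrow X_{q}^{\beta}$ recorded in the third line of \eqref{IS}, which is available precisely because $1-\frac{N}{2q'}<\beta\le 1$. By the growth hypothesis \eqref{fc} together with $f(0)=0$, one has the pointwise bound
$$|f(u(x))-f(v(x))|\le C\big(|u(x)|^{\rho-1}+|v(x)|^{\rho-1}\big)\,|u(x)-v(x)|$$
for almost every $x\in\Omega$, so everything comes down to controlling the $L^{s}(\Omega)$-norm of the right-hand side by a product of $L^{q}(\Omega)=X_{q}^{1}$-norms.

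The choice of $s$ is dictated by H\"older's inequality. Writing $|u|^{\rho-1}|u-v|$ as a product and applying H\"older with the exponents $\frac{q}{\rho-1}$ and $q$, I obtain
$$\big\||u|^{\rho-1}|u-v|\big\|_{L^{s}}\le\big\||u|^{\rho-1}\big\|_{L^{q/(\rho-1)}}\,\|u-v\|_{L^{q}}=\|u\|_{L^{q}}^{\rho-1}\,\|u-v\|_{L^{q}},$$
which forces $\frac{1}{s}=\frac{\rho-1}{q}+\frac{1}{q}=\frac{\rho}{q}$, i.e. $s=\frac{q}{\rho}$. The decisive point is then to verify that this value of $s$ is admissible for the embedding, namely that $s=\frac{q}{\rho}\ge\frac{Nq}{N+2q-2\beta q}$. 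A direct rearrangement shows that this inequality is equivalent to $2q(1-\beta)\ge N(\rho-1)$, that is, to the standing hypothesis $\rho\le 1+\frac{2q}{N}(1-\beta)$. This equivalence is the heart of the lemma, since it is the only place where the precise numerology of the statement is used.

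With $s=q/\rho$ fixed, I sum the two symmetric contributions coming from $|u|^{\rho-1}$ and $|v|^{\rho-1}$ (via the triangle inequality in $L^{s}$) and apply the embedding $L^{s}(\Omega)\hookrightarrow X_{q}^{\beta}$ to get
$$\|f(u)-f(v)\|_{X_{q}^{\beta}}\le C\,\|f(u)-f(v)\|_{L^{s}}\le C\big(\|u\|_{X_{q}^{1}}^{\rho-1}+\|v\|_{X_{q}^{1}}^{\rho-1}\big)\|u-v\|_{X_{q}^{1}},$$
where I used $X_{q}^{1}=L^{q}(\Omega)$ in the last step. The second estimate is then immediate: taking $v=0$ and recalling $f(0)=0$ yields $\|f(u)\|_{X_{q}^{\beta}}\le C\|u\|_{X_{q}^{1}}^{\rho}$.

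I do not anticipate any genuine difficulty here; once the exponents are matched the computation is routine. The two points requiring care are the verification of the equivalence between the admissibility condition $q/\rho\ge\frac{Nq}{N+2q-2\beta q}$ and the hypothesis on $\rho$ (the main obstacle in spirit, being where the statement's exponent range is consumed), and the preliminary observation that $f(u)\in L^{s}(\Omega)$ whenever $u\in L^{q}(\Omega)$, which follows from the continuity of $f$ and the bound $|f(u)|\le C|u|^{\rho}$ since $\big\||u|^{\rho}\big\|_{L^{q/\rho}}=\|u\|_{L^{q}}^{\rho}$.
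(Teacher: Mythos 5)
Your proof is correct and follows essentially the same route as the paper's: the pointwise bound from \eqref{fc}, H\"older's inequality, and the embeddings \eqref{IS}, with the hypothesis $\rho\le 1+\frac{2q}{N}(1-\beta)$ consumed exactly where you say it is, in checking that the Lebesgue exponent is admissible. The only (immaterial) difference is that you absorb the slack in the embedding $L^{q/\rho}(\Omega)\hookrightarrow X_{q}^{\beta}$, whereas the paper works at the critical exponent $\frac{Nq}{N+2q-2\beta q}$ and absorbs the same slack in the inclusion $L^{q}(\Omega)\hookrightarrow L^{\frac{\rho Nq}{N+2q-2\beta q}}(\Omega)$.
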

\begin{proof}
	Using the assumption $1-\frac{N}{2q'}<\beta<1$ and $(\ref{IS})$, we obtain 
	\begin{equation}\label{ime1}
		L^{\frac{Nq}{N+2q-2\beta q}}(\Omega)\hookrightarrow X_{q}^{\beta}.
	\end{equation}
	Moreover, noting that $1<\rho\leq 1+\frac{2q}{N}(1-\beta)$ implies
	$\frac{\rho Nq}{N+2q-2\beta q}\leq q$, we conclude that
	\begin{equation}\label{ime2}
		L^{\frac{\rho Nq}{N+2q-2\beta q}}(\Omega)\hookleftarrow L^q(\Omega).
	\end{equation}
	Finally, taking $u,v\in X_q^1=L^q(\Omega)$ and  using $(\ref{ime1})$, $(\ref{ime2})$ and H\"older's inequality, we get
	\begin{eqnarray*}
		\|f(u)-f(v)\|_{X_{q}^{\beta}} &\leq& \|f(u)-f(v)\|_{L^{\frac{Nq}{N+2q-2\beta q}}(\Omega)}\\
		&\leq & C\|u-v\|_{L^{\frac{\rho Nq}{N+2q-2\beta q}}(\Omega)}(\|u\|_{L^{\frac{\rho Nq}{N+2q-2\beta q}}(\Omega)}^{\rho-1}+\|v\|_{L^{\frac{\rho Nq}{N+2q-2\beta q}}(\Omega)}^{\rho-1})\\
		&\leq& C\|u-v\|_{X_{q}^{1}}(\|u\|_{X_{q}^{1}}^{\rho-1}+\|v\|_{X_{q}^{1}}^{\rho-1})
	\end{eqnarray*}
Finally, taking $v=0$  we get
	\begin{eqnarray*}
		\|f(u)\|_{X_{q}^{\beta}} &\leq&  C\|u\|_{X_{q}^{1}}^{\rho}.
	\end{eqnarray*}
\end{proof}

\section{Linear estimates}			\label{Lest}
This section is dedicated to the study of  $(E_{\alpha}(t))_{t\ge0}$, $(S_{\alpha}(t))_{t\ge0}$ and $(R_{\alpha}(t))_{t\ge0}$ in the scale $X_{q}^{\beta}:=Y_{q}^{\beta-1}$, $\beta\in\R$, of fractional powers spaces associated with the sectorial operator $A_q: X_{q}^{1}\subset X_{q}^{0}\rightarrow X_{q}^{0}$ defined in the previous section.
Initially, consider $\phi_q\in (\frac{\pi}{2}, 
\pi)$  such that $\mathbb{S}_{\phi_q}:=\{z\in\C: |\arg(z)|\leq\phi_q, z\neq 0\}\subset\rho(A_q).$
From now on, we consider $\alpha\in (1,\frac{2\phi_q}{\pi})$. Note that if $\eta_0\in (\frac{\pi}{2}, \frac{\phi_q}{\alpha})$ and $\lambda\in\mathbb{S}_{\eta_0}$ then 
$$\lambda^{\alpha}\in\mathbb{S}_{\phi_q}\subset\rho(A_q)\quad \mbox{and}\quad \|(\lambda^\alpha-A_q)^{-1}\|_{\mathcal{L}(X_{q}^{1})}\leq C|\lambda|^{-\alpha}.$$

We start with the following result.

\begin{pr}\label{lf1} Consider $1<\alpha<\frac{2\phi_q}{\pi}$. For the function 
	\begin{equation}\label{Ealfa}
		E_{\alpha}(t):=\left\{\begin{array}{ll}\displaystyle \frac{1}{2\pi i}\int_{Ha}e^{\lambda t}\lambda^{\alpha-1}(\lambda^{\alpha}-A_{q})^{-1}d\lambda, &  \mbox{if} ~  t> 0,\\
			I, &  \mbox{if} ~ t=0,\end{array}\right.
	\end{equation}
	where $Ha$ is a suitable path and $I$ is the identity operator, the following properties are verified:
	\begin{itemize}
		\item[$(i)$] For all $t>0$, $E_{\alpha}(t):X^{0}_{q}\to X^{0}_{q}$ is well defined and there exists $M>0$ such that 
		$$\|E_{\alpha}(t)x\|_{X^{0}_{q}}\leq M\|x\|_{X^{0}_{q}}, \quad  \forall x\in X^{0}_{q}.$$ 
		\item[$(ii)$] The function $E_{\alpha}(\cdot):[0,\infty)\rightarrow \mathcal{L}(X^{0}_{q})$ is strongly continuous.
		\item[$(iii)$] If  $\beta\in[0, 1]$ then there exists a constant $M>0$ such that 
		$$\|E_{\alpha}(t)x\|_{X^{\beta}_{q}}\leq M t^{-\alpha\beta}\|x\|_{X^{0}_{q}},\quad \forall t>0, \ \forall x\in X^{0}_{q}.$$
	\end{itemize} 
\end{pr}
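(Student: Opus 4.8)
The plan is to reduce all three assertions to pointwise estimates on the integrand of \eqref{Ealfa}, exploiting two resolvent bounds together with a $t$-dependent rescaling of the path $Ha$. Since $A_q$ is sectorial, the norm $\|\cdot\|_{X_q^\beta}$ is equivalent to $\|A_q^\beta\cdot\|_{X_q^0}$ (recall $X_q^\beta=D(A_q^\beta)$ from Section 2), so the only analytic inputs I need are the stated bound $\|(\lambda^\alpha-A_q)^{-1}\|_{\mathcal{L}(X_q^0)}\le C|\lambda|^{-\alpha}$ on $\mathbb{S}_{\eta_0}$ and its fractional companion. The latter I would get from the moment inequality $\|A_q^\beta(\mu-A_q)^{-1}\|\le\|A_q(\mu-A_q)^{-1}\|^{\beta}\|(\mu-A_q)^{-1}\|^{1-\beta}$: as $A_q(\mu-A_q)^{-1}=-I+\mu(\mu-A_q)^{-1}$ is bounded on the sector while $\|(\mu-A_q)^{-1}\|\le C|\mu|^{-1}$, putting $\mu=\lambda^\alpha$ yields
\[
\|A_q^\beta(\lambda^\alpha-A_q)^{-1}\|_{\mathcal{L}(X_q^0)}\le C|\lambda|^{\alpha(\beta-1)},\qquad \lambda\in\mathbb{S}_{\eta_0},\ 0\le\beta\le1.
\]
Because $\lambda\mapsto\lambda^{\alpha-1}(\lambda^\alpha-A_q)^{-1}$ is analytic on $\mathbb{S}_{\eta_0}$ and decays as above, Cauchy's theorem lets me take $Ha$ to be the boundary of $\mathbb{S}_{\eta_0}$ made of the two rays $\{re^{\pm i\eta_0}:r\ge1/t\}$ and the arc $\{t^{-1}e^{i\psi}:|\psi|\le\eta_0\}$; since $\eta_0>\frac{\pi}{2}$ we have $\operatorname{Re}\lambda<0$ on the rays, so $e^{\lambda t}$ decays exponentially there.

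With this path I would prove $(i)$ and $(iii)$ at once, $(i)$ being the case $\beta=0$. Writing $\|E_\alpha(t)x\|_{X_q^\beta}\sim\|A_q^\beta E_\alpha(t)x\|_{X_q^0}$ and moving $A_q^\beta$ inside the integral, the fractional bound controls the integrand by $|e^{\lambda t}|\,|\lambda|^{\alpha-1}|\lambda|^{\alpha(\beta-1)}\|x\|=|e^{\lambda t}|\,|\lambda|^{\alpha\beta-1}\|x\|$. The substitution $\lambda=z/t$ carries $Ha$ onto the fixed radius-one path $Ha_1$ and turns the estimate into $Ct^{-\alpha\beta}\|x\|\int_{Ha_1}|e^z|\,|z|^{\alpha\beta-1}\,|dz|$: the powers of $t$ arising from $|\lambda|^{\alpha\beta-1}$ and from $d\lambda=t^{-1}dz$ combine to exactly $t^{-\alpha\beta}$. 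The residual $z$-integral is finite, being bounded on the unit arc and exponentially small on the rays, which gives $(iii)$; setting $\beta=0$ yields the uniform bound of $(i)$.

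For $(ii)$, continuity on $(0,\infty)$ is immediate: for $t$ in a compact subinterval the integrand is dominated by a fixed integrable function, so dominated convergence (indeed analyticity) applies. The delicate point is continuity at $t=0$, i.e.\ $E_\alpha(t)x\to x$. Here I would use the resolvent identity $\lambda^{\alpha-1}(\lambda^\alpha-A_q)^{-1}=\lambda^{-1}+\lambda^{-1}A_q(\lambda^\alpha-A_q)^{-1}$ together with the classical Hankel evaluation $\frac{1}{2\pi i}\int_{Ha}e^{\lambda t}\lambda^{-1}d\lambda=1$ for $t>0$, which splits $E_\alpha(t)x=x+\frac{1}{2\pi i}\int_{Ha}e^{\lambda t}\lambda^{-1}A_q(\lambda^\alpha-A_q)^{-1}x\,d\lambda$. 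For $x\in D(A_q)$ I would write $A_q(\lambda^\alpha-A_q)^{-1}x=(\lambda^\alpha-A_q)^{-1}A_qx$, bound the integrand by $C|e^{\lambda t}|\,|\lambda|^{-1-\alpha}\|A_qx\|$, and rescale $\lambda=z/t$ to obtain a remainder of size $Ct^{\alpha}\|A_qx\|\to0$. Since $D(A_q)$ is dense in $X_q^0$ and the $E_\alpha(t)$ are uniformly bounded by $(i)$, a three-$\varepsilon$ argument extends $E_\alpha(t)x\to x$ to every $x\in X_q^0$.

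The main obstacle I anticipate is exactly this continuity at the origin: unlike $(i)$ and $(iii)$ it cannot be read off from a size estimate of the integrand, and it requires the algebraic splitting through the resolvent identity plus the density argument to recover $x=E_\alpha(0)x$. A secondary technical point is the rigorous justification of the deformation of $Ha$ to radius $1/t$, which rests on the analyticity of $\lambda\mapsto\lambda^{\alpha-1}(\lambda^\alpha-A_q)^{-1}$ on $\mathbb{S}_{\eta_0}$ and on the exponential decay of $e^{\lambda t}$ along the rays.
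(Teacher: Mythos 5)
Your argument is correct, and for items $(i)$ and $(ii)$ it coincides with the paper's proof: the same Hankel contour shrunk to radius $r=1/t$, the same bound $\|(\lambda^{\alpha}-A_q)^{-1}\|_{\mathcal{L}(X^{0}_{q})}\le C|\lambda|^{-\alpha}$ giving the integrable majorant $|e^{\lambda t}|\,|\lambda|^{-1}$, dominated convergence for continuity at interior times, and for $t\to0^{+}$ the same splitting $\lambda^{\alpha-1}(\lambda^{\alpha}-A_q)^{-1}=\lambda^{-1}+\lambda^{-1}A_q(\lambda^{\alpha}-A_q)^{-1}$ against the Hankel identity, a remainder of order $t^{\alpha}\|A_qx\|$ on $D(A_q)$, and then density plus the uniform bound from $(i)$. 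The only genuine divergence is in item $(iii)$. The paper establishes just the two endpoints --- $\beta=0$ is item $(i)$, and $\beta=1$ comes from the boundedness of $A_q(\lambda^{\alpha}-A_q)^{-1}$ on the sector, which leaves $|\lambda|^{\alpha-1}$ in the integrand and hence produces $t^{-\alpha}$ --- and then obtains the intermediate exponents by interpolating the operator family $E_{\alpha}(t)$ between $\mathcal{L}(X^{0}_{q},X^{0}_{q})$ and $\mathcal{L}(X^{0}_{q},X^{1}_{q})$ via \cite[Theorem V.1.5.2]{Amann1}. You instead interpolate at the level of the resolvent, using the moment inequality to get $\|A_q^{\beta}(\lambda^{\alpha}-A_q)^{-1}\|\le C|\lambda|^{\alpha(\beta-1)}$ and estimating the contour integral directly for every $\beta\in[0,1]$, so that the exponent $t^{-\alpha\beta}$ drops out of the single rescaling $\lambda=z/t$. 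The two devices are the same inequality worn differently: your placement keeps $(iii)$ self-contained and makes the $\beta$-dependence of the constant explicit, at the price of justifying the interchange of the closed operator $A_q^{\beta}$ with the Bochner integral (which your estimates do support); the paper's version reuses $(i)$ verbatim as one endpoint and delegates the interpolation step to a quotable reference. Both routes are complete.
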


\begin{proof}
	\noindent $(i)$ For $1<\alpha<\frac{2\phi_q}{\pi}$ and $r>0$, set $\eta_0\in\left(\frac{\pi}{2} , \frac{\phi_q}{\alpha}\right)$ and define the path $Ha=Ha(r,\eta_0)$  by
	\begin{eqnarray}\label{Hankel}
		Ha=Ha(r,\eta_0)&:=& \{se^{i\eta_0}:r\leq s< \infty\}\cup\{re^{is}:|s|\leq \eta_0\}\cup \{se^{-i\eta_0}:r\leq s<\infty\}\nonumber\\
		& = & Ha_1+Ha_2-Ha_3.
	\end{eqnarray}

	For each $t>0$ fixed, if we assume $r=1/t$, then

		\noindent $\bullet$ Over $Ha_1$ we have
		\begin{eqnarray*}
			\left\|\frac{1}{2\pi i}\int_{Ha_1}e^{\lambda t}\lambda^{\alpha-1}(\lambda^{\alpha}-A_q)^{-1}xd\lambda\right\|_{X^{0}_{q}} %& = & %\frac{1}{2\pi}\left\|\int_{1/t}^\infty e^{ste^{i\eta_0}}s^{\alpha-1} e^{i\eta_0 (\alpha-1)}((se^{i\eta_0})^{\alpha}-A)^{-1}e^{i\eta_0}xds\right\|_{X^{0}_{q}}\\
			& \leq & \frac{C}{2\pi}\int_{1/t}^\infty e^{st\cos\eta_0}s^{-1}ds\|x\|_{X^{0}_{q}}\\
			& \leq & \frac{C}{2\pi}\int_{1/t}^\infty e^{st\cos\eta_0}tds\|x\|_{X^{0}_{q}}\\
			& = & \frac{C}{2\pi}\frac{e^{\cos\eta_0}}{|\cos\eta_0|}\|x\|_{X^{0}_{q}}.
		\end{eqnarray*}
		
		\noindent $\bullet$ Over $Ha_2$ we have
		\begin{eqnarray*}
			\left\|\frac{1}{2\pi i}\int_{Ha_2}e^{\lambda t}\lambda^{\alpha-1}(\lambda^{\alpha}-A_q)^{-1}xd\lambda\right\|_{X^{0}_{q}} %& = & \frac{1}{2\pi}\left\|\int_{-\eta_0}^{\eta_0} e^{re^{is}t}(re^{is})^{\alpha-1}((re^{is})^{\alpha}-A)^{-1}rie^{is}xds\right\|_{X^{0}_{q}}\\
			& \leq & \frac{C}{2\pi}\int_{-\eta_0}^{\eta_0} e^{\cos s}r^{\alpha-1} r^{-\alpha}rds\|x\|_{X^{0}_{q}}\\
			& \leq & \frac{C}{\pi}e\eta_0 \|x\|_{X^{0}_{q}}.
		\end{eqnarray*}
		
		\noindent $\bullet$ Over $Ha_3$ we proceed in the same way as in $Ha_1.$

	Taking $M>0$ as the sum of all bounds obtained above, we deduce that $E_\alpha(t)$ is well defined for each $t>0$ and
	$$\|E_{\alpha}(t)x\|_{X^{0}_{q}}\leq M\|x\|_{X^{0}_{q}}, \ \ \forall t>0, \ \ \forall x\in X^{0}_{q}.$$ 
	
	A classical argumentation using the Cauchy integral theorem ensures that the representation \eqref{Ealfa} is independent of $r>0$ and $\eta_0\in\left(\frac{\pi}{2},\frac{\phi_q}{\alpha}\right)$. Hence, this completes the proof of $(i)$.

	%In fact, consider $r,r'\in (0,\infty)$ and $\eta,\eta'\in\left(\frac{\pi}{2},\frac{\theta}{\alpha}\right),$ without loss of generality, suppose that $\eta'>\eta$ and $r>r'$. Let $D$ be the region lying between the curves $Ha(r,\eta)$ and $Ha(r',\eta')$ and for each $n\in\mathbb{N}$ set $D_n:=D\cap\{z\in\mathbb{C}:|z|\le n\}$, see Figure 2. 
	%		By the Cauchy integral theorem, for all $n\in\mathbb{N}$, we have
	%		\[\int_{\partial D_n}e^{\lambda t}\lambda^{\alpha-1}(\lambda^{\alpha}-A)^{-1}d\lambda=0.\]
	%		Let $R_1$ and $R_2$ be the two arcs contained in $ \{z\in\mathbb{C}:|z|= n\}$. Then
	%		\begin{eqnarray*}
		%\left\|\int_{R_1}e^{\lambda t}\lambda^{\alpha-1}(\lambda^{\alpha}-A)^{-1}d\lambda\right\|_{\mathcal{L}(X_0)} & = & %\left\|\int_\eta^{\eta'}e^{tne^{si}}(ne^{si})^{\alpha-1}\left((ne^{si})^{\alpha}-A\right)^{-1}nie^{si}ds\right\|_{\mathcal{L}(X_0)}\\
		%		& \leq & C\int_\eta^{\eta'}e^{nt\cos s}ds \leq  Ce^{nLt}(\eta'-\eta),
		%\end{eqnarray*}
		%where $L=\sup_{s\in[\eta,\eta']}\cos s <0.$ Therefore, the above integral goes to zero as $n\rightarrow\infty$. A similar procedure shows %that the integral over the arc $R_2$ has the same property. Consequently,
		%		\[\int_{Ha(\eta,r)}e^{\lambda t}\lambda^{\alpha-1}(\lambda^{\alpha}-A)^{-1}d\lambda = \int_{Ha(\eta',r')}e^{\lambda %t}\lambda^{\alpha-1}(\lambda^{\alpha}-A)^{-1}d\lambda.\]
		%		This completes the proof of $(i)$.
		%%%%%%%%%%%%%%%%%%%%%%%%%%%%%%%%%%%%%%%%%%%%%%%%%%%%%%%%%%%%%%%%%%%%%%%%%%%%%%%%%%%%%%%%%%%%%%%%%%%%%%%%%%%
		%%%%%%%%%%%%%%%%%%%%%%%%%%%%%%%%%%%%%%%%%%%%%%%%%%%%%%%%%%%%%%%%%%%%%%%%%%%%%%%%%%%%%%%%%%%%%%%%%%%%%%%%%%%

		\noindent $(ii)$ Consider $\tau>0$ and let $\{t_n\}_{n\in\mathbb{N}}\subset (0,\tau)$ be such that $t_n\rightarrow \tau$ as $n\rightarrow\infty.$ Note that for $x\in X^{0}_{q}$
		\[\|E_\alpha(\tau)x-E_\alpha(t_n)x\|_{X^{0}_{q}} = \frac{1}{2\pi}\left\|\int_{Ha}(e^{\lambda\tau}-e^{\lambda t_n})\lambda^{\alpha-1}(\lambda^{\alpha}-A_q)^{-1}xd\lambda \right\|_{X^{0}_{q}}.\]
		
		We will analyze the above integral over the paths $Ha_1,$ $Ha_2$ and $Ha_3$ separately.

		\noindent $\bullet$ Over $Ha_1$ we have that
		\[\left\|\int_{Ha_1}(e^{\lambda\tau}-e^{\lambda t_n})\lambda^{\alpha-1}(\lambda^{\alpha}-A_q)^{-1}xd\lambda \right\|_{X^{0}_{q}} \leq  C \int_r^\infty \left|e^{\tau se^{i\eta}}-e^{t_n se^{i\eta}}\right|s^{-1}ds\|x\|_{X^{0}_{q}}.\]
		From the dominated convergence theorem, this last term goes to $0$ as $n$ goes to $\infty$.
		
		\noindent $\bullet$ Over $Ha_2$ we have that
		\[\left\|\int_{Ha_2}(e^{\lambda\tau}-e^{\lambda t_n})\lambda^{\alpha-1}(\lambda^{\alpha}-A_q)^{-1}xd\lambda \right\|_{X^{0}_{q}} \leq  C \int_{-\eta}^\eta \left|e^{\tau re^{is}}-e^{t_n re^{is}}\right|ds\|x\|_{X^{0}_{q}}.\]
		Again, from the dominated convergence theorem, this last term goes to $0$ as $n$ goes to $\infty$.
		
		\noindent $\bullet$ Over $Ha_3$ we proceed in the same way as in $Ha_1.$

		Therefore 
		$$\|E_\alpha(\tau )x-E_\alpha(t_n )x\|_{X^{0}_{q}}\rightarrow 0$$ 
		as $n\rightarrow\infty$, for each $x\in X^{0}_{q}$. Analogously, let $\{t_n\}_{n\in\mathbb{N}}\subset (\tau,\infty)$ such that $t_n\rightarrow \tau$ as $n\rightarrow\infty,$ then 
		$$\|E_\alpha(t_n )x-E_\alpha(\tau )x\|_{X^{0}_{q}}\rightarrow 0$$ 
		as $n\rightarrow\infty$, for each $x\in X^{0}_{q}$.
		
		For the proof of the continuity in $t=0$, note that by the Cauchy integral theorem
		\[I=\frac{1}{2\pi i}\left(\int_{Ha}e^\lambda\lambda^{-1}d\lambda\right) I,\]
		where $I$ denotes the identity operator. Then, for each $t>0$ and $x\in D(A_q)=X^{1}_{q}$ 
		\begin{equation*}
			\|E_\alpha(t)x-x\|_{X^{1}_{q}}  = \frac{1}{2\pi}\left\|\int_{Ha}e^{\lambda t}\lambda^{-1}(\lambda^{\alpha}-A_q)^{-1}A_q xd\lambda\right\|_{X^{0}_{q}}.
		\end{equation*}
		If we assume that $r=1/t$ then

		\noindent $\bullet$  Over $Ha_1$ we have 
		\begin{equation*}
			\frac{1}{2\pi}\left\|\int_{Ha_1}e^{\lambda t}\lambda^{-1}(\lambda^{\alpha}-A_q)^{-1}A_q xd\lambda\right\|_{X^{0}_{q}} \leq  \frac{Ce^{\cos\eta}t^{\alpha+1}}{2\pi|\cos\eta|}\|A_q x\|_{X^{0}_{q}}\to 0,\ \mbox{as}\  t\rightarrow 0^+.
		\end{equation*}
		
		\noindent $\bullet$  Over $Ha_2,$ we have 
		\begin{equation*}
			\frac{1}{2\pi}\left\|\int_{Ha_2}e^{\lambda t}\lambda^{-1}(\lambda^{\alpha}-A_q)^{-1}A_q xd\lambda\right\|_{X^{0}_{q}}  \leq  \frac{Ct^{\alpha}e\eta}{\pi}\|A_q x\|_{X^{0}_{q}}\to 0,\ \mbox{as}\  t\rightarrow 0^+.
		\end{equation*}

		\noindent $\bullet$  Over $Ha_3$ we proceed in the same way as in $Ha_1.$
		
		Therefore  $\|E_\alpha(t)x-x\|_{X^{1}_{q}}\rightarrow 0$,
		as $t\rightarrow 0 ^+,$ for any $x\in D(A_{q}).$ Since $X^{0}_{q}=\overline{D(A_{q})},$ it follows that $$\|E_\alpha(t)x-x\|_{X^{0}_{q}}\rightarrow 0\ \mbox{as}\ t\rightarrow 0 ^+,$$ for any $x\in X^{0}_{q}$. This ends the proof of $(ii)$.
		
		%%%%%%%%%%%%%%%%%%%%%%%%%%%%%%%%%%%%%%%%%%%%%%%%%%%%%%%%%%%%%%%%%%%%%%%%%%%%%%%%%%%%%%%%%%%%%%%%%%%%%%%%%%%%%%%%%%%%%%%%%%%%%%%%%%%%%%%%%%%%%%%%%%%%%%%%%%%%
		%%%%%%%%%%%%%%%%%%%%%%%%%%%%%%%%%%%%%%%%%%%%%%%%%%%%%%%%%%

		\noindent $(iii)$ 
		We consider initially the case $\beta =1 $ and we make use of similar argument already used above. Let $Ha$ be the path defined by (\ref{Hankel}), then
		
		\[\|E_{\alpha}(t)x\|_{X^{1}_{q}}=\frac{1}{2\pi}\left\|\int_{Ha}e^{\lambda t}\lambda^{\alpha-1} A(\lambda^{\alpha}-A)^{-1}xd\lambda\right\|_{X^{0}_{q}}.\]
		For each fixed $t>0$, if we assume that $r=1/t$ then

		\noindent $\bullet$  Over $Ha_1$ there exists a constant $k>0$ independent of $\alpha$ such that
		\begin{equation*}
			\frac{1}{2\pi}\left\|\int_{Ha_1}e^{\lambda t}\lambda^{\alpha-1} A(\lambda^{\alpha}-A)^{-1}xd\lambda\right\|_{X^{0}_{q}} \leq \frac{k}{2\pi}\int_{1/t}^\infty e^{st\cos\eta} s^{\alpha-1}ds\|x\|_{X^{0}_{q}}.
		\end{equation*}
		Since $\alpha-1 > 0,$ it follows that
		\begin{eqnarray*}
			\frac{k}{2\pi}\int_{1/t}^\infty e^{st\cos\eta} s^{\alpha-1}ds & = & \frac{k}{2\pi}\frac{e^{\cos\eta}}{|\cos\eta|}t^{-\alpha} -\frac{k}{2\pi}\frac{(\alpha-1)}{t\cos\eta}\int_{1/t}^\infty e^{st\cos\eta}s^{\alpha-2}ds\\
			% & \leq & \frac{k}{2\pi}\frac{e^{\cos\eta}}{|\cos\eta|}t^{-\beta(\alpha+1)}\\
			% & & +\frac{k(\beta(\alpha+1)-1)t^{2-\beta(\alpha+1)}}{2\pi|\cos\eta| t}\int_{1/t}^\infty e^{st\cos\eta}ds\\
			&\leq & \frac{k}{2\pi}\frac{e^{\cos\eta}}{|\cos\eta|}t^{-\alpha} +\frac{k(\alpha-1)e^{\cos\eta}}{2\pi|\cos\eta|^2}t^{-\alpha}\\
			& < & \frac{k}{2\pi|\cos\eta|}\left(1+\frac{1}{|\cos\eta |}\right)t^{-\alpha}.
		\end{eqnarray*}
		
		\noindent $\bullet$  Over $Ha_2$ we have that
		\begin{equation*}
			\frac{1}{2\pi}\left\|\int_{Ha_2}e^{\lambda t}\lambda^{\alpha-1} A(\lambda^{\alpha}-A)^{-1}xd\lambda\right\|_{X^{0}_{q}} \leq  \frac{k}{2\pi}t^{-\alpha}\int_{-\eta}^\eta e^{\cos s}ds \|x\|_{X^{0}_{q}}
			\leq  ket^{-\alpha}\|x\|_{X^{0}_{q}}.
		\end{equation*}
		
		\noindent $\bullet$  Over $Ha_3$ we proceed in the same way as in $Ha_1.$
		
Taking $M>0$ as the sum of the quotas obtained above, we conclude that
		$$\|E_{\alpha}(t)x\|_{X^{1}_{q}}\leq M t^{-\alpha}\|x\|_{X^{0}_{q}},\quad \forall t>0, \ \forall x\in X^{0}_{q}.$$
		From this fact and item $(i)$, we conclude the proof  using \cite[Theorem V.1.5.2]{Amann1}.
	\end{proof}
	
	\begin{pr}\label{analitic}
		Consider $1<\alpha<\frac{2\phi_q}{\pi}$. The family $(E_\alpha(t))_{t\ge0}$ defined by \eqref{Ealfa} admits an analytic extension to the sector $\mathbb{S}_{\pi-\phi_q}.$
	\end{pr}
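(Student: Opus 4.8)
The plan is to keep the contour representation \eqref{Ealfa} but to replace the real parameter $t$ by a complex variable $z$, and to show that the resulting integral defines a holomorphic function on the sector. Concretely, for $z\neq0$ with $|\arg z|\le\pi-\phi_q$ I would set
\[
E_\alpha(z):=\frac{1}{2\pi i}\int_{Ha}e^{\lambda z}\lambda^{\alpha-1}(\lambda^{\alpha}-A_q)^{-1}\,d\lambda,
\]
with $Ha=Ha(r,\eta_0)$ as in \eqref{Hankel}, and then verify two things: that this integral converges absolutely and locally uniformly in $z$, and that it is analytic in $z$. Since it visibly coincides with \eqref{Ealfa} for $z=t>0$, it is the desired analytic extension.

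First I would fix a closed subsector $\{z:|\arg z|\le\psi_0,\ z\neq0\}$ of $\mathbb{S}_{\pi-\phi_q}$ and choose the aperture $\eta_0\in(\frac{\pi}{2},\frac{\phi_q}{\alpha})$ of the contour adapted to it. Writing $z=|z|e^{i\psi}$, on the rays $Ha_1,Ha_3$ one has $\lambda=se^{\pm i\eta_0}$, hence $|e^{\lambda z}|=e^{\,|z|\,s\cos(\eta_0\pm\psi)}$. The estimates are then exactly those of Proposition~\ref{lf1}$(i)$ with $\cos\eta_0$ replaced by $\cos(\eta_0\pm\psi)$, combined with the resolvent bound $\|(\lambda^{\alpha}-A_q)^{-1}\|_{\mathcal{L}(X^{0}_{q})}\le C|\lambda|^{-\alpha}$: provided $\cos(\eta_0\pm\psi)<0$ with a margin uniform over the subsector, the two ray integrals are dominated by $\int_r^\infty e^{-cs}s^{-1}\,ds$ and the arc $Ha_2$ is bounded just as before, which yields absolute, locally uniform convergence.

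With convergence secured, analyticity is routine. For each fixed $\lambda\in Ha$ the integrand is entire in $z$, and local uniform convergence lets me pass a Morera argument through the integral (equivalently, differentiate under the integral sign), so $z\mapsto E_\alpha(z)$ is holomorphic on the open subsector; letting $\psi_0\uparrow\pi-\phi_q$ then exhausts $\mathbb{S}_{\pi-\phi_q}$. Independence of the representation from $r$ and $\eta_0$ follows from Cauchy's theorem exactly as in part $(i)$ of Proposition~\ref{lf1}.

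The heart of the matter, and the step I expect to be the real obstacle, is the angular bookkeeping that makes the previous paragraph legitimate: one must choose a single $\eta_0$ that is simultaneously $<\frac{\phi_q}{\alpha}$, so that the whole contour stays in the region where $\lambda^{\alpha}\in\mathbb{S}_{\phi_q}\subset\rho(A_q)$ and the resolvent estimate is available, and large enough that $\cos(\eta_0\pm\psi)<0$ for every admissible $\psi=\arg z$, so that $e^{\lambda z}$ decays on the rays. It is precisely the compatibility of these two requirements — resting on the standing assumption $1<\alpha<\frac{2\phi_q}{\pi}$, equivalently $\frac{\pi}{2}<\frac{\phi_q}{\alpha}$ — that pins down the admissible range of $\arg z$ and hence the aperture of the sector. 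Making this angular window sharp, and checking that the decay margin can be taken uniform on each closed subsector so that the convergence is genuinely locally uniform, is where the careful work lies.
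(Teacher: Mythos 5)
Your route is the same as the paper's: substitute a complex variable $z$ into the Hankel representation \eqref{Ealfa}, establish absolute and locally uniform convergence of the contour integral, and conclude holomorphy by a Morera argument or by differentiating under the integral sign. You are in fact more explicit than the paper, whose entire proof consists of setting $r=1/|t|$, writing the bound
$\|E_\alpha(t)x\|_{X^{0}_{q}}\leq \bigl(\frac{C}{2\pi}\int_{Ha}e^{\mathrm{Re}(\lambda t)}|\lambda|^{-1}d|\lambda|\bigr)\|x\|_{X^{0}_{q}}$,
and asserting that it is ``not hard to check'' that this integral converges for $t\in\mathbb{S}_{\pi-\phi_q}$.

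The step you defer, however --- the ``angular bookkeeping'' --- is exactly the point on which the whole argument turns, and leaving it open is a genuine gap. Carry it out: on the rays $Ha_1$, $Ha_3$ you need $\cos(\eta_0\pm\psi)<0$ for $\psi=\arg z$, i.e. $\eta_0>\frac{\pi}{2}+|\psi|$, while the availability of the resolvent estimate forces $\eta_0<\frac{\phi_q}{\alpha}$. These two requirements are compatible precisely when $|\arg z|<\frac{\phi_q}{\alpha}-\frac{\pi}{2}$, so the contour method yields analyticity on the open sector of half-aperture $\frac{\phi_q}{\alpha}-\frac{\pi}{2}$, not on $\mathbb{S}_{\pi-\phi_q}$ as stated. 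One has $\pi-\phi_q\le\frac{\phi_q}{\alpha}-\frac{\pi}{2}$ if and only if $\alpha\le\frac{\phi_q}{3\pi/2-\phi_q}$, and since $\frac{\phi_q}{3\pi/2-\phi_q}<\frac{2\phi_q}{\pi}$ whenever $\phi_q<\pi$, there always remain admissible $\alpha\in\bigl(1,\frac{2\phi_q}{\pi}\bigr)$ for which the sector claimed in the proposition is strictly larger than the one this argument can reach. So your instinct that this is ``the real obstacle'' is correct, but the proposal does not resolve it; note that the paper's own one-line convergence claim glosses over precisely the same difficulty, so completing your computation would either repair or correct the stated aperture.
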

	\begin{proof}
		For $t\in\mathbb{S}_{\pi-\phi_q}$ define $E_\alpha(t)$ by \eqref{Ealfa} and let $r=1/|t|.$ Then for each $x\in X^{0}_{q}$ we have
		\[\|E_\alpha(t)x\|_{X^{0}_{q}}\leq \left(\frac{C}{2\pi}\int_{Ha}e^{\mathrm{Re}(\lambda t)}|\lambda|^{-1}d|\lambda|\right)\|x\|_{X^{0}_{q}}.\]
		It is not hard to check that the integral of the right side in the above inequality is convergent. This estimate ensures the absolutely convergence of \eqref{Ealfa} in the sector $\mathbb{S}_{\pi-\phi_q}$, hence $E_\alpha(t)$ is analytic in this region.
	\end{proof}

	\begin{pr}\label{lf2} Consider $1<\alpha<\frac{2\phi_q}{\pi}$. For the function 
		\begin{equation*}
			S_{\alpha}(t):=\left\{\begin{array}{ll}\displaystyle \frac{1}{2\pi i}\int_{Ha}e^{\lambda t}\lambda^{\alpha-2}(\lambda^{\alpha}-A_{q})^{-1}d\lambda, &  \mbox{if} ~  t> 0,\\
				0, &  \mbox{if} ~ t=0,\end{array}\right.
		\end{equation*}
		where $Ha$ is a suitable path and $0$ is the null operator, the following properties are verified:
		\begin{itemize}
			\item[$(i)$] For all $t>0$, $S_{\alpha}(t):X^{0}_{q}\to X^{0}_{q}$ is well defined and there exists $M>0$ such that 
			$$\|S_{\alpha}(t)x\|_{X^{0}_{q}}\leq Mt\|x\|_{X^{0}_{q}}, \quad  \forall x\in X^{0}_{q}.$$ 
			\item[$(ii)$] The function $S_{\alpha}(\cdot ):[0,\infty)\rightarrow \Le(X^{0}_{q})$ is strongly continuous.
			\item[$(iii)$] If  $\beta\in[0, 1]$ then there exists a constant $M>0$ such that 
			$$\|S_{\alpha}(t)x\|_{X^{\beta}_{q}}\leq M t^{1-\alpha\beta}\|x\|_{X^{0}_{q}},\quad \forall t>0, \ \forall x\in X^{0}_{q}.$$
			\item[$(iv)$] The family $(S_\alpha(t))_{t\ge0}$ admits an analytic extension to the sector $\mathbb{S}_{\pi-\phi_q}.$
		\end{itemize} 
	\end{pr}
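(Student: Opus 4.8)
The plan is to follow the proofs of Propositions \ref{lf1} and \ref{analitic} essentially line by line, since $S_\alpha$ differs from $E_\alpha$ only in that the symbol $\lambda^{\alpha-1}$ is replaced by $\lambda^{\alpha-2}$. The extra factor $\lambda^{-1}$ improves the decay of the integrand by one power of $|\lambda|$, and the net effect throughout is to raise every power of $t$ by one. For $(i)$, I would fix $t>0$, use the same Hankel path $Ha=Ha_1+Ha_2-Ha_3$ from \eqref{Hankel} with $r=1/t$, and estimate arc by arc with $\|(\lambda^\alpha-A_q)^{-1}\|_{\mathcal{L}(X^0_q)}\le C|\lambda|^{-\alpha}$. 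On $Ha_1$ the integrand is dominated by $Ce^{st\cos\eta_0}s^{-2}$, and $\int_{1/t}^\infty s^{-2}\,ds=t$; on $Ha_2$ the parametrization $\lambda=re^{is}$ produces a factor $r^{\alpha-2}\cdot r^{-\alpha}\cdot r=r^{-1}=t$. Both contributions are of order $t$, so summing the bounds gives $\|S_\alpha(t)x\|_{X^0_q}\le Mt\|x\|_{X^0_q}$, and the Cauchy integral theorem yields independence of $r$ and $\eta_0$.

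For the strong continuity $(ii)$, continuity at any $\tau>0$ is obtained exactly as in Proposition \ref{lf1}: one writes $S_\alpha(\tau)x-S_\alpha(t_n)x$ as the integral of $(e^{\lambda\tau}-e^{\lambda t_n})\lambda^{\alpha-2}(\lambda^\alpha-A_q)^{-1}x$ and passes to the limit on each arc by the dominated convergence theorem. Continuity at $t=0$, however, needs no separate argument: since $S_\alpha(0)=0$ and $(i)$ already gives $\|S_\alpha(t)x\|_{X^0_q}\le Mt\|x\|_{X^0_q}\to0$ as $t\to0^+$, the delicate density argument used for $E_\alpha$ is avoided entirely. For the analytic extension $(iv)$, I would repeat the proof of Proposition \ref{analitic}: for $t\in\mathbb{S}_{\pi-\phi_q}$ set $r=1/|t|$ and bound $\|S_\alpha(t)x\|_{X^0_q}$ by $\left(\frac{C}{2\pi}\int_{Ha}e^{\mathrm{Re}(\lambda t)}|\lambda|^{-2}\,d|\lambda|\right)\|x\|_{X^0_q}$; the $|\lambda|^{-2}$ decay makes the absolute convergence of the integral on the sector transparent, so $S_\alpha(\cdot)$ is analytic there.

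The estimate $(iii)$ is where the power bookkeeping must be done most carefully, and I would establish the endpoint $\beta=1$ first and then interpolate. For $\beta=1$ I rewrite $A_q(\lambda^\alpha-A_q)^{-1}=\lambda^\alpha(\lambda^\alpha-A_q)^{-1}-I$, which is uniformly bounded in $\lambda$, so that on $Ha_1$ the integrand is controlled by $ke^{st\cos\eta_0}s^{\alpha-2}$. The crucial observation is that for $\alpha\in(1,2)$ one has $\alpha-2\in(-1,0)$, whence the substitution $u=st$ gives $\int_{1/t}^\infty e^{st\cos\eta_0}s^{\alpha-2}\,ds=t^{1-\alpha}\int_1^\infty e^{u\cos\eta_0}u^{\alpha-2}\,du$ with the last integral convergent. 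This produces the factor $t^{1-\alpha}$ directly, and, in contrast with the $E_\alpha$ computation, no integration by parts is required, because the extra $\lambda^{-1}$ has turned a growing power into an integrable one; the arc $Ha_2$ likewise yields $r^{\alpha-1}=t^{1-\alpha}$. Hence $\|S_\alpha(t)x\|_{X^1_q}\le Mt^{1-\alpha}\|x\|_{X^0_q}$. Interpolating between this bound and the bound of $(i)$ by means of \cite[Theorem V.1.5.2]{Amann1} gives the exponent $(1-\beta)\cdot1+\beta(1-\alpha)=1-\alpha\beta$ for each $\beta\in[0,1]$, which is exactly the asserted estimate. The main obstacle is precisely this accounting of powers—checking that each gain of $\lambda^{-1}$ in the symbol and each parametrization of the circular arc is tracked correctly into the exponent of $t$—since the analytic substance is identical to that of Proposition \ref{lf1}.
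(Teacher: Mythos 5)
Your argument is correct, but it is not the route the paper takes. The paper's entire proof of Proposition \ref{lf2} is the subordination identity $S_{\alpha}(t)x=\int_{0}^{t}E_{\alpha}(s)x\,ds$ (obtained from uniqueness of the Laplace transform, since the symbol of $S_\alpha$ is $\lambda^{-1}$ times that of $E_\alpha$), after which items $(i)$, $(ii)$ and $(iv)$ follow by integrating the corresponding statements of Propositions \ref{lf1} and \ref{analitic}. You instead redo the Hankel-contour estimates from scratch, arc by arc, with the symbol $\lambda^{\alpha-2}$; your power bookkeeping on $Ha_1$ and $Ha_2$ is right, your observation that continuity at $t=0$ is free from the bound $Mt$ is right, and the interpolation between the $\beta=0$ and $\beta=1$ endpoints via \cite[Theorem V.1.5.2]{Amann1} gives exactly $t^{1-\alpha\beta}$. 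What the paper's approach buys is brevity and conceptual transparency: one identity disposes of three items at once. What your approach buys is precisely item $(iii)$ at the upper end of the range: integrating the estimate $\|E_\alpha(s)x\|_{X^{\beta}_{q}}\le Ms^{-\alpha\beta}\|x\|_{X^{0}_{q}}$ over $(0,t)$ only converges when $\alpha\beta<1$, and since $\alpha>1$ this excludes $\beta$ near $1$ (in particular the endpoint $\beta=1$, where the integrand $s^{-\alpha}$ is not integrable at the origin). Your direct contour estimate at $\beta=1$, exploiting that $\alpha-2\in(-1,0)$ makes $s^{\alpha-2}$ integrable near infinity after the substitution $u=st$ without any integration by parts, followed by interpolation, closes exactly the case the subordination shortcut does not reach. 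So your proof is not only valid but, for $(iii)$, more complete than the paper's one-line justification.
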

	
	\begin{proof}
		In fact, using the uniqueness of the Laplace transform we have that 
		$$S_{\alpha}(t)x=\int_{0}^{t}E_{\alpha}(s)xds,$$
		for all $t\ge0$ and $x\in X^{0}_{q}$. Hence, the proof follows from Propositions \ref{lf1} and \ref{analitic}.
	\end{proof}

	\begin{pr}\label{lf3} Consider $1<\alpha<\frac{2\phi_q}{\pi}$. For the function 
		\begin{equation*}%\label{Ealfa}
			R_{\alpha}(t):=\left\{\begin{array}{ll}\displaystyle \frac{1}{2\pi i}\int_{Ha}e^{\lambda t}(\lambda^{\alpha}-A_q)^{-1}d\lambda, &  \mbox{if} ~  t> 0,\\
				0, &  \mbox{if} ~ t=0,\end{array}\right.
		\end{equation*}
		where $Ha$ is a suitable path and $0$ is the null operator, the following properties are verified:
		\begin{itemize}
			\item[$(i)$] For all $t>0$, $R_{\alpha}(t):X^{0}_{q}\to X^{0}_{q}$ is well defined and there exists $M>0$ such that 
			$$\|R_{\alpha}(t)x\|_{X^{0}_{q}}\leq Mt^{\alpha-1}\|x\|_{X^{0}_{q}}, \quad  \forall x\in X^{0}_{q}.$$ 
			\item[$(ii)$] The function $R_{\alpha}(\cdot ):[0,\infty)\rightarrow \Le(X^{0}_{q})$ is strongly continuous.
			\item[$(iii)$] If  $\beta\in[0, 1]$ then there exists a constant $M>0$ such that 
			$$\|R_{\alpha}(t)x\|_{X^{\beta}_{q}}\leq M t^{\alpha(1-\beta)-1}\|x\|_{X^{0}_{q}},\quad \forall t>0, \ \forall x\in X^{0}_{q}.$$
			\item[$(iv)$] The family $(R_\alpha(t))_{t\ge0}$ admits an analytic extension to the sector $\mathbb{S}_{\pi-\phi_q}.$
		\end{itemize} 
	\end{pr}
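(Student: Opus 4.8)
The plan is to follow the contour-integral scheme already used for $(E_\alpha(t))_{t\ge0}$ in Proposition \ref{lf1}, since the integrand defining $R_\alpha(t)$ differs from that of $E_\alpha(t)$ only by the absence of the factor $\lambda^{\alpha-1}$; because $\alpha>1$, this makes the resolvent kernel decay faster, so each estimate comes out with an \emph{improved} power of $t$. Throughout I fix $\eta_0\in(\frac{\pi}{2},\frac{\phi_q}{\alpha})$ and the Hankel path $Ha=Ha_1+Ha_2-Ha_3$ from \eqref{Hankel}, and for each $t>0$ I specialize $r=1/t$, exactly as before. The first observation is that $(iii)$ contains $(i)$ as the case $\beta=0$, so I would prove $(iii)$ first and read off $(i)$.

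For $(iii)$ the key quantitative input is the sectorial bound $\|A_q^\beta(\lambda^\alpha-A_q)^{-1}\|_{\mathcal{L}(X^0_q)}\le C|\lambda|^{-\alpha(1-\beta)}$ for $\lambda\in\mathbb{S}_{\eta_0}$ and $\beta\in[0,1]$, which follows from the two endpoint estimates $\|(\lambda^\alpha-A_q)^{-1}\|\le C|\lambda|^{-\alpha}$ and $\|A_q(\lambda^\alpha-A_q)^{-1}\|=\|-I+\lambda^\alpha(\lambda^\alpha-A_q)^{-1}\|\le C$, combined through the moment (interpolation) inequality for fractional powers. Writing $\|R_\alpha(t)x\|_{X^\beta_q}$ as $\frac{1}{2\pi}\|\int_{Ha}e^{\lambda t}A_q^\beta(\lambda^\alpha-A_q)^{-1}x\,d\lambda\|_{X^0_q}$ — legitimate because $A_q^\beta$ is closed and the integral converges absolutely — I would estimate the three pieces as in Proposition \ref{lf1}$(iii)$. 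On $Ha_1$ the substitution $u=st$ gives $\int_{1/t}^\infty e^{st\cos\eta_0}s^{-\alpha(1-\beta)}ds=t^{\alpha(1-\beta)-1}\int_1^\infty e^{u\cos\eta_0}u^{-\alpha(1-\beta)}du$, with the remaining integral convergent since $\cos\eta_0<0$; on the arc $Ha_2$, using $rt=1$, the arclength $2\eta_0$ and $r^{1-\alpha(1-\beta)}=t^{\alpha(1-\beta)-1}$ produce the same order; and $Ha_3$ is identical to $Ha_1$. Summing yields $\|R_\alpha(t)x\|_{X^\beta_q}\le Mt^{\alpha(1-\beta)-1}\|x\|_{X^0_q}$, and $\beta=0$ is precisely $(i)$.

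For $(ii)$, strong continuity on $(0,\infty)$ is obtained verbatim from Proposition \ref{lf1}$(ii)$: for $t_n\to\tau>0$ one estimates $\int_{Ha}(e^{\lambda\tau}-e^{\lambda t_n})(\lambda^\alpha-A_q)^{-1}x\,d\lambda$ piece by piece and passes to the limit by dominated convergence. Continuity at $t=0$ is in fact easier here than for $E_\alpha$: since $R_\alpha(0)=0$, the estimate from $(i)$ gives $\|R_\alpha(t)x\|_{X^0_q}\le Mt^{\alpha-1}\|x\|_{X^0_q}\to0$ as $t\to0^+$ because $\alpha-1>0$, so no auxiliary Cauchy-integral identity is required. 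For $(iv)$ I would mimic Proposition \ref{analitic}: for $t\in\mathbb{S}_{\pi-\phi_q}$ define $R_\alpha(t)$ by the same contour integral with $r=1/|t|$, bound $\|R_\alpha(t)x\|_{X^0_q}\le(\frac{C}{2\pi}\int_{Ha}e^{\mathrm{Re}(\lambda t)}|\lambda|^{-\alpha}\,|d\lambda|)\|x\|_{X^0_q}$, and note that this integral converges, which gives absolute convergence of the representation and hence analyticity on the sector.

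The step I expect to be the main obstacle is $(iii)$, specifically securing the estimate uniformly up to $\beta=1$. The natural shortcut — imitating Proposition \ref{lf2} by writing $R_\alpha(t)x=\frac{1}{\Gamma(\alpha-1)}\int_0^t(t-s)^{\alpha-2}E_\alpha(s)x\,ds$ via uniqueness of the Laplace transform — does deliver $(i)$, $(ii)$ and $(iv)$ cleanly, but for $(iii)$ it leads to the kernel integral $\int_0^t(t-s)^{\alpha-2}s^{-\alpha\beta}ds$, which diverges as soon as $\alpha\beta\ge1$, hence only covers $\beta<1/\alpha$. This is exactly why I would carry out $(iii)$ directly on the contour, where the factor $|\lambda|^{-\alpha(1-\beta)}$ keeps every integral convergent for all $\beta\in[0,1]$. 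The remaining care is to justify interchanging $A_q^\beta$ with the integral and to record that all constants $M$ are independent of $t$.
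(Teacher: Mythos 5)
Your proof is correct, but it follows a genuinely different route from the paper. The paper's own proof is a one-line subordination argument: by uniqueness of the Laplace transform, $R_{\alpha}(t)x=\int_{0}^{t}g_{\alpha-1}(t-s)E_{\alpha}(s)x\,ds$ with $g_{\alpha-1}(t)=t^{\alpha-2}/\Gamma(\alpha-1)$, and all four items are then read off from Propositions \ref{lf1} and \ref{analitic}, exactly as was done for $S_{\alpha}$ in Proposition \ref{lf2}. You instead work directly on the Hankel contour, using the moment inequality to get $\|A_q^{\beta}(\lambda^{\alpha}-A_q)^{-1}\|\leq C|\lambda|^{-\alpha(1-\beta)}$ and estimating the three pieces of $Ha$ with $r=1/t$. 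What your approach buys is precisely the point you flag yourself: for item $(iii)$ the subordination route produces the kernel integral $\int_0^t(t-s)^{\alpha-2}s^{-\alpha\beta}\,ds=B(\alpha-1,1-\alpha\beta)\,t^{\alpha(1-\beta)-1}$, whose Beta constant is finite only for $\beta<1/\alpha$; since $\alpha>1$, the paper's argument as written does not reach $\beta$ near $1$ (in particular not the endpoint $\beta=1$), and would need to be supplemented, e.g., by proving the $\beta=1$ case directly on the contour and interpolating via \cite[Theorem V.1.5.2]{Amann1} as in Proposition \ref{lf1}$(iii)$. Your direct contour estimate avoids this issue entirely and covers all $\beta\in[0,1]$ uniformly; your treatment of $(i)$, $(ii)$ and $(iv)$ matches the intended arguments (with the nice simplification at $t=0$ in $(ii)$ coming from $\alpha-1>0$). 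The only housekeeping left in your write-up is the justification for commuting $A_q^{\beta}$ with the contour integral, which follows from closedness of $A_q^{\beta}$ and the absolute convergence you establish.
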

	
	\begin{proof}
		As in the proof of the last proposition,  the uniqueness of the Laplace transform ensures that
		$$R_{\alpha}(t)x=\int_{0}^{t}g_{\alpha-1}(t-s)E_{\alpha}(s)xds,$$
		for all $t\ge0$ and $x\in X^{0}_{q}$. Hence, the result follows from Propositions \ref{lf1} and \ref{analitic}.
	\end{proof}

	\begin{pr}\label{compinter}
		Consider $1<\alpha<\frac{2\phi_q}{\pi}$ and $0\le\theta<\beta\le 1$. There exists $M>0$ such that
		\begin{equation*}
			\|E_{\alpha}(t)x\|_{X^{1+\theta}_{q}}\leq Mt^{-\alpha(1+\theta-\beta)}\|x\|_{X^{\beta}_{q}},\quad \|S_{\alpha}(t)x\|_{X^{1+\theta}_{q}}\leq Mt^{1-\alpha(1+\theta-\beta)}\|x\|_{X^{\beta}_{q}}
		\end{equation*}
		and
		\begin{equation*}
			\|R_{\alpha}(t)x\|_{X^{1+\theta}_{q}}\leq Mt^{-1-\alpha(\theta-\beta)}\|x\|_{X^{\beta}_{q}},
		\end{equation*}
		for all $x\in X^{\beta}_{q}$ and $t>0$. 
	\end{pr}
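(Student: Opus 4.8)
The plan is to obtain all three bounds as consequences of the estimates already proved in Propositions \ref{lf1}, \ref{lf2} and \ref{lf3}, by shifting the base level of the scale through the fractional powers of $A_q$. The decisive structural facts I would use are, first, that each of $E_\alpha(t)$, $S_\alpha(t)$, $R_\alpha(t)$ is built from the resolvent $(\lambda^\alpha-A_q)^{-1}$ and hence commutes with every bounded power $A_q^{-\beta}$, and second, that the interpolation--extrapolation construction makes $A_q^{\delta}\colon X^{\sigma}_q\to X^{\sigma-\delta}_q$ an isometric isomorphism for all $\sigma,\delta\in\R$, so that $\|w\|_{X^{\sigma}_q}=\|A_q^{\sigma}w\|_{X^{0}_q}$. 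The whole argument is then exponent bookkeeping once the commutation is in hand.

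Concretely, I would set $\gamma:=1+\theta-\beta$ and first record that the hypothesis $0\le\theta<\beta\le1$ forces $\gamma\in[0,1)$, so $\gamma$ lies in the range $[0,1]$ for which item $(iii)$ of each of the three preceding propositions applies. Given $x\in X^{\beta}_q$, I would put $y:=A_q^{\beta}x\in X^{0}_q$, so that $\|y\|_{X^{0}_q}=\|x\|_{X^{\beta}_q}$. Moving the bounded operator $A_q^{-\beta}$ through the defining contour integral \eqref{Ealfa} (legitimate because it commutes with the integrand $(\lambda^\alpha-A_q)^{-1}$) gives $E_\alpha(t)x=E_\alpha(t)A_q^{-\beta}y=A_q^{-\beta}E_\alpha(t)y$. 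Since $A_q^{-\beta}$ carries $X^{\gamma}_q$ isometrically onto $X^{\gamma+\beta}_q=X^{1+\theta}_q$, I obtain
\[
\|E_\alpha(t)x\|_{X^{1+\theta}_q}=\|A_q^{-\beta}E_\alpha(t)y\|_{X^{1+\theta}_q}=\|E_\alpha(t)y\|_{X^{\gamma}_q}\le M\,t^{-\alpha\gamma}\|y\|_{X^{0}_q}=M\,t^{-\alpha(1+\theta-\beta)}\|x\|_{X^{\beta}_q},
\]
where the inequality is Proposition \ref{lf1}$(iii)$ applied with exponent $\gamma$.

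The estimates for $S_\alpha(t)$ and $R_\alpha(t)$ would follow by the identical reduction, invoking Proposition \ref{lf2}$(iii)$ and Proposition \ref{lf3}$(iii)$ in place of Proposition \ref{lf1}$(iii)$; the target exponents then appear automatically, since $1-\alpha\gamma=1-\alpha(1+\theta-\beta)$ and $\alpha(1-\gamma)-1=\alpha(\beta-\theta)-1=-1-\alpha(\theta-\beta)$. The main obstacle I anticipate is not computational but the rigorous justification of the commutation step $A_q^{-\beta}E_\alpha(t)=E_\alpha(t)A_q^{-\beta}$ (and its analogues for $S_\alpha$, $R_\alpha$), together with the consistency of the realizations of these families across the levels of the scale $\{X^{\sigma}_q\}_{\sigma\in\R}$. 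I would settle this by observing that $A_q^{-\beta}$ is a fixed bounded operator commuting with each resolvent appearing in the Bochner integral, so it passes under the integral sign without any new convergence requirement, and that the isometric identities $\|A_q^{-\beta}z\|_{X^{1+\theta}_q}=\|z\|_{X^{\gamma}_q}$ are immediate from the definition of the scale norms; with these in place the displayed chain closes the proof, the boundary case $\gamma=0$ being covered by the $\beta=0$ instance of the respective item $(iii)$.
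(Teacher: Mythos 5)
Your argument is correct, but it is genuinely different from the one in the paper. You reduce everything to the base-level estimates of Propositions \ref{lf1}--\ref{lf3}$(iii)$ by conjugating with the fractional power $A_q^{\beta}$: writing $x=A_q^{-\beta}y$, commuting $A_q^{-\beta}$ through the contour integral, and using that $A_q^{-\beta}$ maps $X^{\gamma}_{q}$ onto $X^{\gamma+\beta}_{q}$ with equivalence of norms, so that the three bounds at level $\gamma=1+\theta-\beta\in[0,1)$ immediately translate into the stated ones. The paper instead proceeds by interpolation: it first gets $\|E_{\alpha}(t)\|_{\mathcal{L}(X^{\beta}_{q},X^{1}_{q})}\le Mt^{-\alpha(1-\beta)}$ by interpolating the operator norms in $\mathcal{L}(X^{0}_{q},X^{1}_{q})$ and $\mathcal{L}(X^{1}_{q},X^{1}_{q})$, then re-runs the contour estimate one rung up the scale to get $\|E_{\alpha}(t)\|_{\mathcal{L}(X^{1}_{q},X^{2}_{q})}\le Mt^{-\alpha}$, interpolates again to reach $\mathcal{L}(X^{\beta}_{q},X^{1+\beta}_{q})$, and finally applies the moment inequality on the target space between $X^{1}_{q}$ and $X^{1+\beta}_{q}$ to land in $X^{1+\theta}_{q}$; the $S_{\alpha}$ and $R_{\alpha}$ cases are then obtained from the subordination formulas rather than from their own item $(iii)$. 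Your route is shorter and makes the scale-invariance of the families explicit in one stroke, at the price of invoking that $A_q^{-\beta}$ is a bounded operator commuting with the resolvent and that the scale norms are (equivalent to) $\|A_q^{\sigma}\cdot\|_{X^{0}_{q}}$ --- facts which are implicit in the paper's construction of the extrapolation scale but which you should state as ``up to equivalence of norms'' rather than ``isometrically,'' absorbing the constants into $M$. Note also that both proofs ultimately need the consistency of $E_{\alpha}(t)$ across levels of the scale: the paper secures it by repeating the Hankel-contour computation at the higher level, you by the commutation identity, and these are two faces of the same fact.
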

	
	\begin{proof} 
		Initially, note that
		$$\|E_{\alpha}(t)\|_{\mathcal{L}(X^{0}_{q},X^{1}_{q})}\leq Mt^{-\alpha}\quad\mbox{and}\quad \|E_{\alpha}(t)\|_{\mathcal{L}(X^{1}_{q},X^{1}_{q})}\leq M,$$
		for some $M>0$ and all $t>0$. Consequently, using \cite[Inequality I.2.1.1]{Amann1}, there exists $c>0$ such that
		$$\|E_{\alpha}(t)\|_{\mathcal{L}(X^{\beta}_{q},X^{1}_{q})}\leq c \|E_{\alpha}(t)\|_{\mathcal{L}(X^{0}_{q},X^{1}_{q})}^{1-\beta}\|E_{\alpha}(t)\|_{\mathcal{L}(X^{1}_{q},X^{1}_{q})}^\beta\le M t^{-\alpha(1-\beta)},$$
		for some $M>0$ and all $t>0$, which proves the case $\theta=0$.
		
		$$	\begin{tikzpicture}
			\matrix (m) [matrix of math nodes,row sep=5em,column sep=6em,minimum width=4em]
			{
				X^{1}_{q} & X^{\beta}_{q} & X^{0}_{q} \\
				X^{1}_{q} & X^{1}_{q} & X^{1}_{q} \\};
			\path[-stealth]
			(m-1-1) edge node [left] {$E_{\alpha}(t)$} (m-2-1) 
			edge  (m-1-2)
			(m-1-2)	edge  (m-1-3)
			(m-2-1) edge (m-2-2)
			(m-1-2) edge node [right] {$E_{\alpha}(t)$} (m-2-2)
			(m-2-2)	edge  (m-2-3)
			(m-1-3) edge node [right] {$E_{\alpha}(t)$} (m-2-3);
		\end{tikzpicture}$$
		
		On the other hand, the same steps used in the proof of item $(iii)$ of Proposition \ref{lf1} ensure that
		$$\|E_{\alpha}(t)\|_{\mathcal{L}(X^{1}_{q},X^{2}_{q})}\leq Mt^{-\alpha},$$
		for some $M>0$ and all $t>0$.  So, as before,
		there exists $c>0$ such that
		$$\|E_{\alpha}(t)\|_{\mathcal{L}(X^{\beta}_{q},X^{1+\beta}_{q})}\leq c \|E_{\alpha}(t)\|_{\mathcal{L}(X^{0}_{q},X^{1}_{q})}^{1-\beta}\|E_{\alpha}(t)\|_{\mathcal{L}(X^{1}_{q},X^{2}_{q})}^\beta\le M t^{-\alpha},$$
		for some $M>0$ and all $t>0$. 
		
		$$	\begin{tikzpicture}
			\matrix (m) [matrix of math nodes,row sep=5em,column sep=6em,minimum width=4em]
			{
				X^{1}_{q} & X^{1}_{q} & X^{0}_{q} \\
				X^{2}_{q} & X^{1+\beta}_{q} & X^{1}_{q} \\};
			\path[-stealth]
			(m-1-1) edge node [left] {$E_{\alpha}(t)$} (m-2-1) 
			edge  (m-1-2)
			(m-1-2)	edge  (m-1-3)
			(m-2-1) edge (m-2-2)
			(m-1-2) edge node [right] {$E_{\alpha}(t)$} (m-2-2)
			(m-2-2)	edge  (m-2-3)
			(m-1-3) edge node [right] {$E_{\alpha}(t)$} (m-2-3);
		\end{tikzpicture}$$
		Furthermore, since $\theta<\beta$ we have $1< 1+\theta<1+\beta$, for $\theta>0$. Thus, using \cite[Theorem V.1.5.2]{Amann1}, there exist a constant $c>0$ such that for all $x\in X^{\beta}_{q}$
		$$
		\|E_{\alpha}(t)x\|_{X^{1+\theta}_{q}}  \leq  c\|E_{\alpha}(t)x\|_{X^{1}_{q}}^{\frac{\beta-\theta}{\beta}}\|E_{\alpha}(t)x\|_{X^{1+\beta}_{q}}^{\frac{\theta}{\beta}} \le Mt^{-\alpha(1+\theta-\beta)}\|x\|_{X^{\beta}_{q}},
		$$
		for some $M>0$ and  all $t>0$. We conclude the proof using  the subordinations given in the proofs of Propositions \ref{lf2} and \ref{lf3} to obtain the results about $(S_{\alpha}(t))_{t\ge0}$ and $(R_{\alpha}(t))_{t\ge0}$, respectively.
	\end{proof}		
	
%	\begin{remark}\label{obs3}
%		Particularly, it follows from the above estimate that for all $t>0$, the operators $t^{\alpha\theta}E_{\alpha}(t)$ from $X^{1}_{q}$ into $X^{1+\theta}_{q}$ are bounded linear operators satisfying
	%	$$\|t^{\alpha\theta}E_{\alpha}(t)\|_{\mathcal{L}(X^{1}_{q},X^{1+\theta}_{q})}\le M,$$
	%	with $M>0$ independent of $t>0$, for all $0\le\theta\le 1$. Moreover, given a compact subset $J$ of $X^{1}_{q}$, we have
	%	$$\lim_{t\to0^{+}}\sup_{x\in J}\|t^{\alpha\theta}E_{\alpha}(t)x\|_{X^{1+\theta}_{q}}=0.$$
	%	To prove the last assertion it is sufficient to observe that the operators $$t^{\alpha\theta}E_{\alpha}(t):X^{1}_{q}\to X^{1+\theta}_{q}$$ are uniformly bounded in $t>0$,  $\lim_{t\to0^{+}}\|t^{\alpha\theta}E_{\alpha}(t)x\|_{X^{1+\theta}_{q}}=0$, for  $x\in X^{1+\theta}_{q},$
	%	and $X^{1+\theta}_{q}$ is a dense subset of $X^{1}_{q}$. %Similar results can be proved to $(S_{\alpha}(t))_{t\ge0}$ and $(R_{\alpha}(t))_{t\ge0}$.
%	\end{remark}

\section{Main results} \label{Main}

This section contains statements and proofs of our theorems. We start with the following result that ensures sufficient conditions for local existence, uniqueness, and spatial regularity to the problem.

\begin{theorem}\label{existence1}
	Let $1<\alpha<\frac{2\phi_q}{\pi}$. Consider $\beta \ge 0$ such that $1-\frac{N}{2q'}<\beta<1$ and $1<\rho\leq 1+\frac{2q}{N}(1-\beta )$. Given $v_0\in L^{q}(\Omega)$, we can consider $r>0$ and $\tau >0$ such that for any $u_0, u_1\in B_{r}(v_0)\subset L^{q}(\Omega)$ there exists a unique mild solution $u\in C([0,\tau ], L^{q}(\Omega))$ of the problem $(\ref{awave})$. Furthermore, for all $0\le\theta<\beta$, it follows that
	$$u\in C((0,\tau ],X_{q}^{1+\theta})$$
	and if $\theta>0$ then
	$$\lim_{t\to0}t^{\alpha\theta}\|u(t,u_0,u_1)\|_{X_{q}^{1+\theta}}= 0.$$
	Moreover, if $u_0, w_0, u_1, w_1\in B_r(v_0)\subset L^{q}(\Omega)$, then there exists a constant $c>0$ such that
	$$t^{\alpha\theta}\|u(t,u_0,u_1)-u(t,w_0,w_1)\|_{X_{q}^{1+\theta}}\le c\left(\|u_0 - w_0\|_{L^{q}(\Omega)}+\|u_1 - w_1\|_{L^{q}(\Omega)}\right),$$
	for all $t\in[0, \tau]$.
\end{theorem}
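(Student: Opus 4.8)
The plan is to obtain the mild solution as the unique fixed point of the Duhamel operator
\[
(\Phi u)(t) = E_{\alpha}(t)u_0 + S_{\alpha}(t)u_1 + \int_0^t R_{\alpha}(t-s)f(u(s))\,ds,
\]
on a closed ball of $C([0,\tau],L^q(\Omega))=C([0,\tau],X_q^1)$, and then to read off the regularity and continuous dependence directly from this identity. First I would fix the size of the ball. Since $E_{\alpha}(t)$ is bounded on $X_q^1$ and $\|S_{\alpha}(t)\|_{\Le(X_q^1)}\le Mt$ (these are exactly the $\theta=0$ bounds established in the proof of Proposition \ref{compinter}), the linear part $E_{\alpha}(t)u_0+S_{\alpha}(t)u_1$ is bounded on $[0,\tau]$ by a constant depending only on $\|v_0\|_{X_q^1}+r$. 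Taking $R$ to be, say, twice this constant fixes the ball $K=\{u\in C([0,\tau],X_q^1):\sup_t\|u(t)\|_{X_q^1}\le R\}$ uniformly over all $u_0,u_1\in B_r(v_0)$.

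For the nonlinear term I would combine Lemma \ref{lemaonda} with the smoothing estimate of Proposition \ref{compinter} in the form $\|R_{\alpha}(t-s)x\|_{X_q^1}\le M(t-s)^{\alpha\beta-1}\|x\|_{X_q^{\beta}}$. Because $f(u(s))\in X_q^{\beta}$ with $\|f(u(s))\|_{X_q^{\beta}}\le C\|u(s)\|_{X_q^1}^{\rho}$ and the exponent $\alpha\beta-1>-1$, the Duhamel integral is dominated by $MCR^{\rho}\tau^{\alpha\beta}/(\alpha\beta)$, so $\Phi$ maps $K$ into itself once $\tau$ is small; the Lipschitz half of Lemma \ref{lemaonda} gives
\[
\sup_t\|(\Phi u)(t)-(\Phi v)(t)\|_{X_q^1}\le \frac{2MCR^{\rho-1}}{\alpha\beta}\,\tau^{\alpha\beta}\,\sup_t\|u(t)-v(t)\|_{X_q^1}.
\]
Shrinking $\tau$ so the prefactor is strictly less than one, the Banach contraction principle produces the unique $u\in C([0,\tau],L^q(\Omega))$.

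The regularity $u\in C((0,\tau],X_q^{1+\theta})$ for $0\le\theta<\beta$ follows by inserting the fixed point back into $\Phi$ and estimating in $X_q^{1+\theta}$: Proposition \ref{compinter} (applied with its weight index equal to $1$) yields $\|E_{\alpha}(t)u_0\|_{X_q^{1+\theta}}\le Mt^{-\alpha\theta}\|u_0\|_{X_q^1}$ and $\|S_{\alpha}(t)u_1\|_{X_q^{1+\theta}}\le Mt^{1-\alpha\theta}\|u_1\|_{X_q^1}$, while $\|R_{\alpha}(t-s)f(u(s))\|_{X_q^{1+\theta}}\le MC(t-s)^{\alpha(\beta-\theta)-1}\|u(s)\|_{X_q^1}^{\rho}$, the last exponent exceeding $-1$ precisely because $\theta<\beta$; hence $u(t)\in X_q^{1+\theta}$ for every $t>0$, and continuity on $(0,\tau]$ comes from the strong continuity of the three families together with dominated convergence for the integral term. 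For the limit I would multiply by $t^{\alpha\theta}$: the $S_{\alpha}$ and Duhamel contributions carry the surplus factors $t$ and $t^{\alpha\beta}$ and therefore vanish as $t\to0^+$, whereas for $E_{\alpha}(t)u_0$ I would use the density of $X_q^{1+\theta}$ in $X_q^1$ together with the uniform boundedness of $t^{\alpha\theta}E_{\alpha}(t):X_q^1\to X_q^{1+\theta}$, exactly as sketched in the Introduction.

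Finally, for continuous dependence I would first settle the case $\theta=0$: subtracting the two Duhamel identities and using the Lipschitz bound yields
\[
\|u(t)-w(t)\|_{X_q^1}\le M\|u_0-w_0\|_{X_q^1}+M\tau\|u_1-w_1\|_{X_q^1}+2MCR^{\rho-1}\int_0^t(t-s)^{\alpha\beta-1}\|u(s)-w(s)\|_{X_q^1}\,ds,
\]
and a singular Gronwall inequality with the weakly singular kernel $(t-s)^{\alpha\beta-1}$ upgrades this to $\|u(t)-w(t)\|_{X_q^1}\le c(\|u_0-w_0\|_{X_q^1}+\|u_1-w_1\|_{X_q^1})$ on $[0,\tau]$. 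For $\theta>0$ I would repeat the subtraction in $X_q^{1+\theta}$, multiply by $t^{\alpha\theta}$, and replace $\|u(s)-w(s)\|_{X_q^1}$ inside the integral by the constant just obtained; the weighted bounds then give the claimed estimate. The main obstacle I anticipate is twofold: carrying out the singular Gronwall step rigorously (the kernel is only weakly singular, so one must invoke the fractional Gronwall/Mittag--Leffler form rather than the classical lemma), and checking that a single pair $(r,\tau)$ can be chosen so that the self-mapping, contraction, and continuity estimates all hold uniformly for initial data in $B_r(v_0)$.
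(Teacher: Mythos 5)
Your fixed-point scheme is essentially the paper's: the authors also run Banach's contraction principle for the Duhamel operator on a closed subset of $C([0,\tau],X^1_q)$, using Lemma \ref{lemaonda} together with the $\|R_{\alpha}(t-s)x\|_{X^1_q}\le M(t-s)^{\alpha\beta-1}\|x\|_{X^{\beta}_q}$ estimate from Proposition \ref{compinter}, and they obtain the regularity and the vanishing of $t^{\alpha\theta}\|u(t)\|_{X^{1+\theta}_q}$ exactly as you do (including the density argument for the $E_{\alpha}(t)u_0$ term). Two differences are worth recording. First, the paper's invariant set is a ball centred at $v_0$ rather than at the origin, which forces them to use the strong continuity of $E_{\alpha}$ at $t=0$ (to make $\|E_{\alpha}(t)v_0-v_0\|$ small) and to take $r$ proportional to the ball's radius; your origin-centred ball of radius $R\sim\|v_0\|+r$ is simpler and works just as well here. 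Second, for continuous dependence the paper does \emph{not} invoke a singular Gronwall inequality: since $\tau$ has already been chosen so that the Duhamel term is a $\tfrac14$-contraction, the integral contribution in the $\theta=0$ difference estimate is bounded by $\tfrac14\sup_t\|u(t)-w(t)\|_{X^1_q}$ and is simply absorbed into the left-hand side, after which the $\theta>0$ case follows by feeding this bound back into the weighted estimate. This sidesteps entirely the fractional Gronwall step you flag as your main anticipated obstacle; your Gronwall route is also valid (the paper cites Henry's Lemma 7.1.1 elsewhere), just heavier than necessary.

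One point you should tighten: the contraction principle yields uniqueness only within the invariant set $K$, whereas the theorem asserts uniqueness among \emph{all} mild solutions in $C([0,\tau],L^q(\Omega))$. The paper closes this with a separate argument: for two arbitrary mild solutions it derives
\[
\|u(t)-v(t)\|_{X^1_q}\le b\int_0^t(t-s)^{\alpha\beta-1}\|u(s)-v(s)\|_{X^1_q}\,ds,
\qquad b=CM\bigl(\textstyle\sup_s\|u(s)\|^{\rho-1}_{X^1_q}+\sup_s\|v(s)\|^{\rho-1}_{X^1_q}+1\bigr),
\]
and applies the singular Gronwall lemma. Your proposal has all the ingredients for this (it is the same weakly singular kernel you use for continuous dependence, with $R^{\rho-1}$ replaced by the sup norms of the two given solutions), but as written the uniqueness claim is not justified beyond $K$.
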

\begin{proof}
	%   Since $AE_{\alpha}(tA)x=E_{\alpha}(tA)Ax$ and $AS_{\alpha}(tA)x=S_{\alpha}(tA)Ax$, for all $x\in X^{1}_{q}$, follows of the Lemmas $\ref{lf1}$ e $\ref{lf4}$, that the functions 
	%   $$E_{\alpha}(.A)u_0:[0,\infty)\rightarrow X^{1}_{q},$$  
	%  $$S_{\alpha}(.A)u_1:[0,\infty)\rightarrow X^{1}_{q}$$ 
	%  are continuous. 
	Consider $0<\nu<1$ and $0<\tau<1$ such that 
	$$\|E_{\alpha}(t)v_0-v_0\|_{X^{1}_{q}}<\dfrac{\nu}{4},\quad \|S_{\alpha}(t)u_1\|_{X^{1}_{q}}<\dfrac{\nu}{4}$$
	and
	$$CMR\tau^{\alpha\beta}{\bf B}(\alpha\beta,1)<\dfrac{\nu}{4},$$
	for all $t\in[0,\tau]$,	where 
	$$R=\max\{(\|v_0\|_{X^{1}_{q}}+ \nu)^{\rho}, 2(\|v_0\|_{X^{1}_{q}}+ \nu)^{\rho-1}\}.$$
	For $r=\dfrac{\nu}{4M}$, set
	$$K=\left\{u\in C([0,\tau], X^{1}_{q}): \sup_{t\in [0,\tau]}\|u(t)-v_0\|_{X^{1}_{q}}\leq\nu\right\}$$
	and define the operator
	$$(Tu)(t)=E_{\alpha}(t)u_0+S_{\alpha}(t)u_1+\int_{0}^{t}R_{\alpha}(t-s)f(u(s))ds.$$
	Our purpose is to show that $K$ is a $T$-invariant set and $T$ is a contraction. Initially, let us prove that $T:K\to K$ is well defined. Consider $0\le t_1<t< \tau$ and $u\in K$. Then
	\begin{eqnarray*}
		\|(Tu)(t)-(Tu)(t_1)\|_{X^{1}_{q}}&\leq& \|E_{\alpha}(t)u_0-E_{\alpha}(t_1)u_0\|_{X^{1}_{q}}+ \|S_{\alpha}(t)u_1-S_{\alpha}(t_1)u_1\|_{X^{1}_{q}}\\
		&+& I,
	\end{eqnarray*}
	where $$I:=\left\|\int_{0}^{t}R_{\alpha}(t-s)f(u(s))ds-\int_{0}^{t_1}R_{\alpha}(t_1-s)f(u(s))ds\right\|_{X^{1}_{q}}.$$
Using the strong continuity of the families $(E_{\alpha}(t))_{t\ge0}$ and $(S_{\alpha}(t))_{t\ge0}$, we have that
	$$\|E_{\alpha}(t)u_0-E_{\alpha}(t_1)u_0\|_{X^{1}_{q}}\rightarrow 0 \quad \mbox{and} \quad \|S_{\alpha}(t)u_1-S_{\alpha}(t_1)u_1\|_{X^{1}_{q}}\rightarrow 0$$
	when $t\rightarrow t_1^+$. On the other hand
	\begin{eqnarray*}
		I&\leq&\int_{0}^{t_1}\|R_{\alpha}(t-s)f(u(s))-R_{\alpha}(t_1-s)f(u(s))\|_{X^{1}_{q}}ds\\
		&+& \int_{t_1}^{t}\|R_{\alpha}(t-s)f(u(s))\|_{X^{1}_{q}}ds.
	\end{eqnarray*} 
	Note that
	\begin{eqnarray*}
		\int_{t_1}^{t}\|R_{\alpha}(t-s)f(u(s))\|_{X^{1}_{q}}ds&\leq& M\int_{t_1}^{t}(t-s)^{\alpha\beta-1}\|f(u(s))\|_{X^{\beta}_{q}}ds\\
		&\leq& \dfrac{MC}{\alpha\beta}(\|v_0\|_{X^{1}_{q}}+\nu)^\rho(t-t_1)^{\alpha\beta}\rightarrow 0,
	\end{eqnarray*}
	when $t\rightarrow t_1^+.$  By setting
	$$I_1:=\int_{0}^{t_1}\|R_{\alpha}(t-s)f(u(s))-R_{\alpha}(t_1-s)f(u(s))\|_{X^{1}_{q}}ds,$$
	we obtain
	\begin{eqnarray*}
		I_1&\leq&\int_{0}^{t_1}[(t-s)^{\alpha\beta-1}-(t_1-s)^{\alpha\beta-1}]\|\tilde{R}_{\alpha}(t-s)f(u(s))\|_{X^{1}_{q}}ds\\
		&+& \int_{0}^{t_1}(t_1-s)^{\alpha\beta-1}\|[\tilde{R}_{\alpha}(t-s)f(u(s))-\tilde{R}_{\alpha}(t_1-s)f(u(s))\|_{X^{1}_{q}}ds,
	\end{eqnarray*}
	where $\tilde{R}_{\alpha}(t):= t^{1-\alpha\beta}R_{\alpha}(t)$, $t>0$. The family $(\tilde{R}_{\alpha}(t))_{t>0}$ is strongly continuous in $X^{0}_{q}$, and from Proposition \ref{compinter} we have
	\begin{equation*}
		\|\tilde{R}_{\alpha}(t)x\|_{X^{1}_{q}}\leq M \|x\|_{X^{\beta}_{q}}, \ \ \forall t>0,\ \ \forall x\in X^{\beta}_{q}.
	\end{equation*}
	Set
	$$I_2=\int_{0}^{t_1}[(t-s)^{\alpha\beta-1}-(t_1-s)^{\alpha\beta-1}]\|\tilde{R}_{\alpha}(t-s)f(u(s))\|_{X^{1}_{q}}ds$$
	and
	$$I_3=\int_{0}^{t_1}(t_1-s)^{\alpha\beta-1}\|[\tilde{R}_{\alpha}(t-s)f(u(s))-\tilde{R}_{\alpha}(t_1-s)f(u(s))\|_{X^{1}_{q}}ds.$$
	Then, 
	\begin{eqnarray*}
		I_2	&\leq& \dfrac{MC}{\alpha\beta}(\|v_0\|_{X^{1}_{q}}+\nu)^{\rho}[t^{\alpha\beta}-(t-t_1)^{\alpha\beta}-t_1^{\alpha\beta}]
		\rightarrow 0,
	\end{eqnarray*}
	when $t\rightarrow t_1^+$. Finally, using the strong continuity of $(\tilde{R}_{\alpha}(t))_{t>0}$ and the Lebesgue's dominated
	convergence theorem we prove that $I_3$ goes to $0$, as $t\rightarrow t_1^+$.
	This fact ensures that
	$$\|(Tu)(t)-(Tu)(t_1)\|_{X^{1}_{q}}\rightarrow 0,$$
	when $t\rightarrow t_1^+$. A similar argument proves the case $t_1 \in(0,\tau]$ and  $t\rightarrow t_1^-$.
	
	To conclude that $K$ is a $T$-invariant set, consider $t\in [0,\tau]$ and $u\in K$. Then we have
	\begin{eqnarray*}
		\|(Tu)(t)-v_0\|_{X^{1}_{q}}&\leq& \|E_{\alpha}(t)u_0-v_0\|_{X^{1}_{q}} + \|S_{\alpha}(t)u_1\|_{X^{1}_{q}}\\
		&+& \int_{0}^{t}\|R_{\alpha}(t-s)f(u(s))\|_{X^{1}_{q}} ds\\
		&\leq& \|E_{\alpha}(t)u_0-E_{\alpha}(t)v_0\|_{X^{1}_{q}}+\|E_{\alpha}(t)v_0-v_0\|_{X^{1}_{q}}\\
		&+& \|S_{\alpha}(t)u_1\|_{X^{1}_{q}} + CM\int_{0}^{t}(t-s)^{-1+\alpha\beta}\|u(s)\|_{X^{1}_{q}}^{\rho}ds\\
		&\leq&  \dfrac{3\nu}{4}+ CMR\tau^{\alpha\beta}B(1, \alpha\beta)\le \nu.
	\end{eqnarray*}
	That is, $T: K\to K$ is well defined.
	
	Now, let $t\in [0,\tau]$ and $u, v \in K$. Then
	$$\|(Tu)(t)-(Tv)(t)\|_{X^{1}_{q}}\leq \int_{0}^{t}\|R_{\alpha}(t-s)[f(u(s))-f(v(s))]\|_{X^{1}_{q}}ds$$
	$$\leq\ M\int_{0}^{t}(t-s)^{-1+\alpha\beta}\|f(u(s))-f(v(s))\|_{X^{\beta}}ds$$
	$$\leq\ MC\int_{0}^{t}(t-s)^{-1+\alpha\beta}[\|u(s)\|_{X^{1}_{q}}^{\rho-1}+ \|v(s)\|_{X^{1}_{q}}^{\rho-1}]\|u(s)-v(s)\|_{X^{1}_{q}}ds$$
	%$$\leq\ MC(\sup_{s\in[0,\tau]}\|u(s)\|_{X^{1}_{q}}^{\rho-1} + \sup_{s\in[0,\tau]}\|v(s)\|_{X^{1}_{q}}^{\rho-1}+1)\tau^{\alpha\beta}B(1,\alpha\beta)\sup_{s\in [0,\tau]}\|u(s)-v(s)\|_{X^{1}_{q}}$$
	$$\leq\ MC[2(\|v_0\|_{X^{1}_{q}} +\nu)^{\rho-1}]\tau^{\alpha\beta}B(1,\alpha\beta)\sup_{s\in [0,\tau]}\|u(s)-v(s)\|_{X^{1}_{q}}$$
	$$\leq\ MCR\tau^{\alpha\beta}B(1,\alpha\beta)\sup_{s\in [0,\tau]}\|u(s)-v(s)\|_{X^{1}_{q}}< \frac{1}{4}\sup_{s\in [0,\tau]}\|u(s)-v(s)\|_{X^{1}_{q}}.$$
	This shows that $T:K\rightarrow K$ is a $\frac{1}{4}$-contraction and by the Banach fixed point theorem, $T$ has a unique fixed point in $K$ which is a mild solution to the problem $\eqref{awave}$. 
	
	To prove the uniqueness of mild solutions, suppose that $ u, v $ are two mild solutions of $(\ref{awave})$. Then for all $t\in [0,\tau]$ we have that
	$$\|u(t)-v(t)\|_{X^{1}_{q}}%&=& \left\|\int_{0}^{t}R_{\alpha}(t-s)f(s,u(s))ds-\int_{0}^{t}R_{\alpha}(t-s)f(s,v(s))ds\right\|_{X^{1}_{q}}\\
	\leq\int_{0}^{t}\|R_{\alpha}(t-s)[f(u(s))-f(v(s))]\|_{X^{1}_{q}}ds$$
	$$\leq M \int_{0}^{t}(t-s)^{-1+\alpha\beta}\|f(u(s))-f(v(s))]\|_{X^{\beta}_{q}}ds$$
	$$\leq CM\int_{0}^{t}(t-s)^{-1+\alpha\beta}(\|u(s)\|_{X^{1}_{q}}^{\rho-1}+ \|v(s)\|_{X^{1}_{q}}^{\rho-1})\|u(s)-v(s)\|_{X^{1}_{q}}ds$$
	$$\leq b\int_{0}^{t}(t-s)^{-1+\alpha\beta}\|u(s)-v(s)\|_{X^{1}_{q}}ds,$$
	where $b:= CM(\sup_{s\in[0,\tau]}\|u(s)\|_{X^{1}_{q}}^{\rho-1}+ \sup_{s\in[0,\tau]}\|v(s)\|_{X^{1}_{q}}^{\rho-1}+1)$.
	We can now apply the singular Gronwall inequality, see \cite[Lemma 7.1.1, page 188]{Henry}, to conclude that $u(t)=v(t)$, for all $t\in[0,\tau]$. 
	
	To complete the proof consider $0<\theta<\beta$. Then for all $0< t\leq \tau$ we have 
	\begin{eqnarray*}
		\|u(t)\|_{X^{1+\theta}_{q}} & \le & \|E_{\alpha}(t)u_0\|_{X^{1+\theta}_{q}}+\|S_{\alpha}(t)u_1\|_{X^{1+\theta}_{q}}\\
		&+& \int_{0}^{t}\|R_{\alpha}(t-s)f(u(s))\|_{X^{1+\theta}_{q}}ds\\
		&\le& Mt^{-\alpha\theta}\|u_0\|_{X^{1}_{q}}+Mt^{1-\alpha\theta}\|u_1\|_{X^{1}_{q}}\\
		&+& CM\int_{0}^{t}(t-s)^{-1-\alpha(\theta-\beta)}\|u(s)\|_{X^{1}_{q}}^{\rho}ds\\
		&\le& Mt^{-\alpha\theta}\|u_0\|_{X^{1}_{q}}+Mt^{1-\alpha\theta}\|u_1\|_{X^{1}_{q}}\\
		&+& CM\sup_{0\le t\le \tau}\|u(t)\|_{X^{1}_{q}}^{\rho}t^{\alpha(\beta-\theta)}{\bf B}(\alpha(\beta-\theta),1).
	\end{eqnarray*}
	Therefore, $u:(0,\tau]\to X^{1+\theta}_{q}$ is well defined. With a similar argument to that used before we prove that $u\in C((0,\tau];X^{1+\theta}_{q})$. From the above estimate if $t>0$ we have that
	\begin{eqnarray*}
		t^{\alpha\theta}\|u(t)\|_{X^{1+\theta}_{q}} & \le & t^{\alpha\theta}\|E_{\alpha}(t)u_0\|_{X^{1+\theta}_{q}}+Mt\|u_1\|_{X^{1}_{q}}\\
		&+& CM\sup_{0\le t\le \tau}\|u(t)\|_{X^{1}_{q}}^{\rho}t^{\alpha\beta}{\bf B}(\alpha(\beta-\theta),1),
	\end{eqnarray*}
%	From Remark \ref{obs3}, 
which implies that the right side of the above inequality goes to $0$ as $t\to 0^+$. Finally, if $u_0, u_1,w_0,w_1\in B_{r}(v_0)$ then
	\begin{eqnarray*}
		t^{\alpha\theta} \|u(t,u_0,u_1)-u(t,w_0,w_1)\|_{X^{1+\theta}_{q}} &\le& M\left(\|u_0-w_0\|_{X^{1}_{q}}+\|u_1-w_1\|_{X^{1}_{q}}\right)\\
		&+& \Gamma_{\theta}(t)\sup_{0\le t\le \tau}\|u(t,u_0,u_1)-u(t,w_0,w_1)\|_{X^{1}_{q}},
	\end{eqnarray*}
	where
	\begin{eqnarray*}
		\Gamma_{\theta}(t)&=&CMt^{\alpha\beta}{\bf B}(\alpha(\beta-\theta),1)\sup_{0\le t\le \tau}\|u(t,u_0,u_1)\|_{X^{1}_{q}}^{\rho-1}\\
		&+&CMt^{\alpha\beta}{\bf B}(\alpha(\beta-\theta),1)\sup_{0\le t\le \tau}\|u(t,w_0,w_1)\|_{X^{1}_{q}}^{\rho-1}.
	\end{eqnarray*}
	For $\theta=0$ we get
	\begin{eqnarray*}
		\|u(t,u_0,u_1)-u(t,w_0,w_1)\|_{X^{1}_{q}} &\le& M\left(\|u_0-w_0\|_{X^{1}_{q}}+\|u_1-w_1\|_{X^{1}_{q}}\right)\\
		&+& \frac{1}{4}\sup_{0\le t\le \tau}\|u(t,u_0,u_1)-u(t,w_0,w_1)\|_{X^{1}_{q}},
	\end{eqnarray*}
and then
	$$\sup_{0\le t\le \tau}\|u(t,u_0,u_1)-u(t,w_0,w_1)\|_{X^{1}_{q}} \le 2M\left(\|u_0-w_0\|_{X^{1}_{q}}+\|u_1-w_1\|_{X^{1}_{q}}\right)$$
	By taking $0\le\theta\le\theta_0<\beta$, these inequalities imply that
	\begin{eqnarray*}
		t^{\alpha\theta} \|u(t,u_0,u_1)-u(t,w_0,w_1)\|_{X^{1+\theta}_{q}} &\le& M\left(\|u_0-w_0\|_{X^{1}_{q}}+\|u_1-w_1\|_{X^{1}_{q}}\right)
	\end{eqnarray*}
	$$+\ 2M\Gamma_{\theta}(t)\left(\|u_0-w_0\|_{X^{1}_{q}}+\|u_1-w_1\|_{X^{1}_{q}}\right),$$
	that is
	$$t^{\alpha\theta} \|u(t,u_0,u_1)-u(t,w_0,w_1)\|_{X^{1+\theta}_{q}} \le c\left(\|u_0-w_0\|_{X^{1}_{q}}+\|u_1-w_1\|_{X^{1}_{q}}\right),$$
	where
	$$c=M(1+2\sup\{\Gamma_{\theta}(t):\ 0\le t\le \tau,\ 0\le\theta\le\theta_0 \}),$$
	which concludes the proof.
\end{proof}

%In the above result we consider the initial data $u_0$ and $u_1$ belonging to $X^{1}_{q}=L^{q}(\Omega)$. The next theorem is a complementary result where $u_0$ belongs to $L^{q}(\Omega)$, and the second initial condition has even less regularity.  %These hypothesis are compatible with the limit case of derivation order  $\alpha=2$. Indeed, observe that in such a case we have
%$$X^{1}_{2}=L^{2}(\Omega)\quad \mbox{and}\quad X^{\frac{\alpha-1}{\alpha}}_{2}=X^{\frac{1}{2}}_{2}=W$$

%\textcolor{red}{PROVAR}

%\begin{theorem}\label{existence2}
%	Let $1<\alpha<\frac{2\phi_q}{\pi}$. Consider $\beta \ge 0$ such that $1-\frac{N}{2q'}<\beta<1$ and $1<\rho\leq 1+\frac{2q}{N}(1-\beta )$. Given $v_0\in L^{q}(\Omega)$, we can consider $r>0$ and $\tau >0$ such that for any $u_0\in B_{r}(v_0)\subset L^{q}(\Omega)$ and $u_1\in X^{\frac{\alpha-1}{\alpha}}_{q}$, there exists a unique mild solution $u\in C([0,\tau ], L^{q}(\Omega))$ of the problem $(\ref{awave})$. Furthermore, for all $0\le\theta<\beta$, it follows that
%	$$u\in C((0,\tau ],X_{q}^{1+\theta})$$
%	and if $\theta>0$ then
%	$$\lim_{t\to0}t^{\alpha\theta}\|u(t,u_0,u_1)\|_{X_{q}^{1+\theta}}= 0.$$
%	Moreover, if $u_0, w_0, u_1, w_1\in B_r(v_0)\subset L^{q}(\Omega)$, then there exists a constant $c>0$ such that
%	$$t^{\alpha\theta}\|u(t,u_0,u_1)-u(t,w_0,w_1)\|_{X_{q}^{1+\theta}}\le c\left(\|u_0 - w_0\|_{L^{q}(\Omega)}+\|u_1 - w_1\|_{L^{q}(\Omega)}\right),$$
%	for all $t\in[0, \tau]$.
%\end{theorem}

Now, we prove a continuation result and a blow-up alternative for the mild solution guaranteed by Theorem \ref{existence1}. In particular, we ensure a maximal time of existence to this solution.

\begin{de} Let $u:[0,\tau]\to X^{1}_{q}$ be a mild solution of the problem (\ref{awave}). If $t_1>\tau$ and $v:[0, t_1]\to X^{1}_{q}$ is a mild solution of the problem (\ref{awave}), then we say that $v$ is a continuation of $u$ in $[0, t_1]$.
\end{de}

\begin{theorem}\label{continuation}
	Under the conditions of Theorem \ref{existence1}, consider $\tau>0$ and $u: [0,\tau ]\rightarrow X^{1}_{q}$ the mild solution of problem (\ref{awave}). Then, there exist $t_1>\tau$ and a unique continuation $u^*$ of $u$ in $[0,t_1]$.
\end{theorem}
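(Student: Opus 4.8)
The plan is to realize the continuation as the solution of a fresh fixed point problem posed on a short interval $[\tau,t_1]$, treating the part of the solution already constructed on $[0,\tau]$ as a known inhomogeneity. The feature that makes this more delicate than the classical semigroup case is that \eqref{awave} is nonlocal in time: the Duhamel term $\int_0^t R_\alpha(t-s)f(u(s))\,ds$ carries the entire history of $u$, so one cannot simply ``restart'' at $t=\tau$ with new initial data. To handle this, for $t\ge\tau$ I would split the convolution as $\int_0^t=\int_0^\tau+\int_\tau^t$ and absorb the first integral, which depends only on the known restriction $u|_{[0,\tau]}$, into a forcing term. Concretely, I define for $t\in[\tau,t_1]$
\begin{equation*}
 g(t):=E_\alpha(t)u_0+S_\alpha(t)u_1+\int_0^\tau R_\alpha(t-s)f(u(s))\,ds .
\end{equation*}
Using Proposition \ref{compinter}, Lemma \ref{lemaonda} and the strong continuity of $(E_\alpha(t))_{t\ge0}$, $(S_\alpha(t))_{t\ge0}$ and $(R_\alpha(t))_{t\ge0}$, the same estimates used in Theorem \ref{existence1} (in particular the analysis of the terms $I_1,I_2,I_3$) show that $g\in C([\tau,t_1],X^1_q)$; note that the kernel singularity $(t-s)^{\alpha\beta-1}$ of $R_\alpha$ near $s=\tau$ is integrable since $\alpha\beta-1>-1$, and that $g(\tau)=u(\tau)$.

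Next I would set up a contraction mirroring the proof of Theorem \ref{existence1}. Fixing $\nu>0$ and $t_1>\tau$ (to be chosen), consider the complete metric space
\begin{equation*}
 \widetilde K=\Big\{v\in C([\tau,t_1],X^1_q):\ v(\tau)=u(\tau),\ \sup_{t\in[\tau,t_1]}\|v(t)-u(\tau)\|_{X^1_q}\le\nu\Big\}
\end{equation*}
with the supremum metric, and define
\begin{equation*}
 (Fv)(t)=g(t)+\int_\tau^t R_\alpha(t-s)f(v(s))\,ds,\qquad t\in[\tau,t_1].
\end{equation*}
The bound $\|R_\alpha(t-s)f(v(s))\|_{X^1_q}\le MC\,(t-s)^{\alpha\beta-1}\|v(s)\|_{X^1_q}^{\rho}$ from Proposition \ref{compinter} and Lemma \ref{lemaonda} produces a factor controlled by $(t_1-\tau)^{\alpha\beta}{\bf B}(\alpha\beta,1)$, and the Lipschitz estimate of Lemma \ref{lemaonda} gives the analogous difference bound. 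Hence, choosing $t_1-\tau$ sufficiently small, $F$ maps $\widetilde K$ into itself and is a contraction, and the Banach fixed point theorem yields a unique $v\in\widetilde K$.

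Finally I would glue the pieces. Set $u^*:=u$ on $[0,\tau]$ and $u^*:=v$ on $[\tau,t_1]$; since $v(\tau)=u(\tau)$ the result is continuous, and for $t\in[\tau,t_1]$ the equalities $f(u^*(s))=f(u(s))$ on $[0,\tau]$ and $f(u^*(s))=f(v(s))$ on $[\tau,t_1]$ recombine the split integral into $u^*(t)=E_\alpha(t)u_0+S_\alpha(t)u_1+\int_0^t R_\alpha(t-s)f(u^*(s))\,ds$, so $u^*$ is a mild solution on $[0,t_1]$ and a continuation of $u$ with $t_1>\tau$. For uniqueness I would invoke the argument already used in Theorem \ref{existence1}: if $u_1^*,u_2^*$ are two continuations on $[0,t_1]$, they coincide with $u$ on $[0,\tau]$, and applying the singular Gronwall inequality \cite[Lemma 7.1.1, page 188]{Henry} to $\|u_1^*(t)-u_2^*(t)\|_{X^1_q}$ on $[0,t_1]$ forces $u_1^*=u_2^*$.

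I expect the main obstacle to be the careful treatment of the history term $g$ near $t=\tau$: one must verify that the memory integral over $[0,\tau]$ remains continuous in $X^1_q$ up to the endpoint $t=\tau^+$ despite the kernel singularity, and that $g(\tau)=u(\tau)$, so that the glued function is genuinely continuous and actually satisfies the integral equation on the whole interval $[0,t_1]$. Once this splitting is justified, the remaining existence, contraction and uniqueness estimates are routine adaptations of the computations already carried out in Theorem \ref{existence1}.
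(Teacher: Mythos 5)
Your proposal is correct and follows essentially the same route as the paper: the paper's set $K$ consists of functions on $[0,t_1]$ pinned to $u$ on $[0,\tau]$, so its operator $T$ restricted to $[\tau,t_1]$ is exactly your $Fv(t)=g(t)+\int_\tau^t R_\alpha(t-s)f(v(s))\,ds$, and both arguments rest on the same smallness conditions at $t_1$, the same invariance/contraction estimates via Proposition \ref{compinter} and Lemma \ref{lemaonda}, and the singular Gronwall inequality for uniqueness. Your explicit isolation of the history term $g$ and the gluing step is just an unpacking of the paper's formulation, and the continuity of $g$ up to $t=\tau^+$ that you flag is precisely what the paper secures through the condition $\int_0^\tau\|[R_\alpha(t-s)-R_\alpha(\tau-s)]f(u(s))\|_{X^1_q}\,ds<\nu/4$.
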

\begin{proof}
	Fixe $0<\nu\leq 1$  and take $t_1> \tau $ so that for all $t\in [\tau, t_1]$,
	$$\|E_{\alpha}(t)u_0-E_{\alpha}(\tau)u_0\|_{X^{1}_{q}}<\dfrac{\nu}{4},\quad \|S_{\alpha}(t)u_1-S_{\alpha}(\tau )u_1\|_{X^{1}_{q}}< \dfrac{\nu}{4},$$
	$$\int_{0}^{\tau }\|[R_{\alpha}(t-s)-R_{\alpha}(\tau -s)]f(u(s))\|_{X^{1}_{q}}ds< \dfrac{\nu}{4}$$
	and
	$$MCRt^{\alpha\beta}\int_{\tau /t}^{1}(1-s)^{-1+\alpha\beta}ds< \dfrac{\nu}{4}, $$
	where $R:=\max\{(\|u(\tau )\|_{X^{1}_{q}}+\nu)^{\rho}, 2(\|u(\tau )\|_{X^{1}_{q}}+\nu)^{\rho-1}\}$.  Set
	$$K=\left\{v\in C([0,t_1]; X^{1}_{q}): \sup_{t\in[\tau ,t_1]}\|v(t)-u(\tau )\|_{X^{1}_{q}}\leq\nu, v(t)=u(t), t\in[0,\tau ]\right\}$$
	and define on $K$ the operator 
	$$(Tv)(t)=E_{\alpha}(t)u_0+S_{\alpha}(t)u_1+\int_{0}^{t}R_{\alpha}(t-s)f(v(s))ds.$$
	Note that, if $v\in K$ then $(Tv)(t)=u(t)$, for all $t\in[0,\tau ]$. The same computations performed there show that $Tv\in C([0,t_1]; X^{1}_{q})$  whenever $v\in K$. For $t\in[\tau ,t_1]$, we have
	$$\|(Tv)(t)-u(\tau )\|_{X^{1}_{q}}\leq\|E_{\alpha}(t)u_0-E_{\alpha}(\tau)u_0\|_{X^{1}_{q}} + \|S_{\alpha}(t)u_1-S_{\alpha}(\tau)u_1\|_{X^{1}_{q}}$$
	\begin{eqnarray*}	
		&+& \left\|\int_{0}^{t}R_{\alpha}(t-s)f(v(s))ds-\int_{0}^{\tau }R_{\alpha}(\tau -s)f(u(s))ds \right\|_{X^{1}_{q}}\\
		&\leq& \frac{3\nu}{4} +  \int_{\tau }^{t}\|R_{\alpha}(t-s)f(v(s))\|_{X^{1}_{q}}ds\\
		&\leq& \frac{3\nu}{4} + \int_{\tau }^{t}\|R_{\alpha}(t-s)f(v(s))\|_{X^{1}_{q}}ds\\
		&\leq& \frac{3\nu}{4}+MCRt^{\alpha\beta}\int_{\tau /t}^{1}(1-s)^{-1+\alpha\beta}ds\le \nu.
	\end{eqnarray*} 
	Then, it follows that
	$$\|(Tv)(t)-u(\tau )\|_{X^{1}_{q}}\leq \nu, \ \ \forall t\in [\tau ,t_1],$$
	whence $K$ is a $T$-invariant set. We claim that $T:K\rightarrow K$ is a contraction. In fact, if $v,w\in K$ and $t\in[\tau , t_1]$ then
	$$\|(Tv)(t)-(Tw)(t)\|_{X^{1}_{q}}=\left\|\int_{\tau }^{t}R_{\alpha}(t-s)[f(v(s))-f(w(s))]ds\right\|_{X^{1}_{q}}$$
	$$\leq MC\int_{\tau }^{t}(t-s)^{-1+\alpha\beta}(\|v(s)\|_{X^{1}_{q}}^{\rho-1}+\|w(s)\|_{X^{1}_{q}}^{\rho-1})\|v(s)-w(s)\|_{X^{1}_{q}}ds$$
	$$\leq MCRt^{\alpha\beta}\int_{\tau /t}^{1}(1-s)^{-1+\alpha\beta}ds\sup_{s\in[0,t_1]}\|v(s)-w(s)\|_{X^{1}_{q}}\leq \frac{1}{4}\sup_{s\in[0,t_1]}\|v(s)-w(s)\|_{X^{1}_{q}}.$$
	That is,
	$$\sup_{t\in[0,t_1]}\|(Tv)(t)-(Tw)(t)\|_{X^{1}_{q}}\leq\frac{1}{4}\sup_{t\in[0,t_1]}\|v(s)-w(s)\|_{X^{1}_{q}}.$$
	Hence $T$ is a strict contraction and has a unique fixed point $u^\ast\in K$ which is a continuation of $u$ in $[0,t_1]$. As before, the uniqueness can be shown by Gronwall Lemma.
\end{proof}

Next is our result of global existence or non-continuation by a blow-up. 

\begin{theorem}\label{Blow} Under the conditions of Theorem \ref{existence1}, let $u$ be the mild solution of problem (\ref{awave}) with maximal time of existence $\tau_{\max}>0$. Then either $\tau_{\max}=\infty$ or $$\displaystyle\limsup_{t\rightarrow \tau_{\max}^-}\|u(t)\|_{X^{1}_{q}}=\infty.$$
\end{theorem}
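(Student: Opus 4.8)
The plan is to argue by contradiction, exploiting the continuation mechanism of Theorem \ref{continuation}. Suppose that $\tau_{\max}<\infty$ while $\limsup_{t\to\tau_{\max}^-}\|u(t)\|_{X^{1}_{q}}<\infty$. Since $u$ is continuous into $X^{1}_{q}$ on every compact subinterval of $[0,\tau_{\max})$, and its norm stays bounded near $\tau_{\max}$ by the finiteness of the $\limsup$, one obtains a uniform bound $M_0:=\sup_{t\in[0,\tau_{\max})}\|u(t)\|_{X^{1}_{q}}<\infty$. By Lemma \ref{lemaonda} this yields $\|f(u(s))\|_{X^{\beta}_{q}}\le C\|u(s)\|_{X^{1}_{q}}^{\rho}\le CM_0^{\rho}$ for all $s\in[0,\tau_{\max})$, which is the a priori control that drives the whole argument.

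The core step is to show that $u$ extends to a mild solution on the closed interval $[0,\tau_{\max}]$, i.e.\ that $\lim_{t\to\tau_{\max}^-}u(t)$ exists in $X^{1}_{q}$. I would establish this via a Cauchy criterion. Writing, for $\tau_{\max}-\delta<t_1<t_2<\tau_{\max}$,
$$u(t_2)-u(t_1)=[E_{\alpha}(t_2)-E_{\alpha}(t_1)]u_0+[S_{\alpha}(t_2)-S_{\alpha}(t_1)]u_1+J(t_1,t_2),$$
where $J(t_1,t_2)=\int_{0}^{t_2}R_{\alpha}(t_2-s)f(u(s))\,ds-\int_{0}^{t_1}R_{\alpha}(t_1-s)f(u(s))\,ds$, the first two terms tend to $0$ as $t_1,t_2\to\tau_{\max}^-$ by the strong continuity of $(E_{\alpha}(t))_{t\ge0}$ and $(S_{\alpha}(t))_{t\ge0}$ (Propositions \ref{lf1} and \ref{lf2}). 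For $J$, I would split exactly as in the proof of Theorem \ref{existence1}: bound $\int_{t_1}^{t_2}R_{\alpha}(t_2-s)f(u(s))\,ds$ by $\frac{MCM_0^{\rho}}{\alpha\beta}(t_2-t_1)^{\alpha\beta}$ using Proposition \ref{compinter}, and treat $\int_{0}^{t_1}[R_{\alpha}(t_2-s)-R_{\alpha}(t_1-s)]f(u(s))\,ds$ through the regularized family $\tilde{R}_{\alpha}(t)=t^{1-\alpha\beta}R_{\alpha}(t)$ together with the singular-kernel splitting and the dominated convergence theorem. This shows $J(t_1,t_2)\to 0$, so $u(t)$ is Cauchy as $t\to\tau_{\max}^-$; the limit exists in $X^{1}_{q}$, and we define $u(\tau_{\max})$ to be this limit. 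Passing to the limit $t\to\tau_{\max}^-$ in \eqref{vp} (using the same estimates) shows that the extended function satisfies the variation-of-constants formula at $t=\tau_{\max}$, so that $u\in C([0,\tau_{\max}],X^{1}_{q})$ is a mild solution on the closed interval.

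With $u$ now a mild solution on $[0,\tau_{\max}]$, I would invoke Theorem \ref{continuation} directly: it provides $t_1>\tau_{\max}$ and a unique continuation of $u$ to $[0,t_1]$, which is again a mild solution of \eqref{awave}. This contradicts the definition of $\tau_{\max}$ as the maximal time of existence, since the solution then exists on $[0,t_1]$ with $t_1>\tau_{\max}$. Hence, when $\tau_{\max}<\infty$, the assumption $\limsup_{t\to\tau_{\max}^-}\|u(t)\|_{X^{1}_{q}}<\infty$ is untenable, and the blow-up alternative follows.

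I expect the main obstacle to be the Cauchy estimate for the memory term $J(t_1,t_2)$, and in particular controlling the near-diagonal singularity $(t-s)^{-1+\alpha\beta}$ uniformly as both endpoints approach $\tau_{\max}$: one must ensure that the contribution of the kernel difference $R_{\alpha}(t_2-s)-R_{\alpha}(t_1-s)$ over the entire history $[0,t_1]$ vanishes, which is precisely where the strong continuity of $\tilde{R}_{\alpha}$ and an integrable dominating function, available thanks to the a priori bound $\|f(u(s))\|_{X^{\beta}_{q}}\le CM_0^{\rho}$, become essential.
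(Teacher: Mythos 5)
Your proposal is correct and follows essentially the same route as the paper: assume $\tau_{\max}<\infty$ with a uniform bound on $\|u(t)\|_{X^{1}_{q}}$, prove a Cauchy criterion at $\tau_{\max}^-$ by splitting off the $E_{\alpha}$ and $S_{\alpha}$ terms via strong continuity and handling the memory term through $\tilde{R}_{\alpha}(t)=t^{1-\alpha\beta}R_{\alpha}(t)$ with the singular-kernel splitting and dominated convergence, then extend $u$ to $[0,\tau_{\max}]$ and invoke Theorem \ref{continuation} to contradict maximality. The only cosmetic difference is that the paper phrases the Cauchy argument in terms of sequences $t_n\to\tau_{\max}$ rather than a two-endpoint criterion.
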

\begin{proof}
	Suppose that $\tau_{\max}<\infty$ and there exists $k>0$ such that $\|u(t)\|_{X^{1}_{q}}\leq k$, for all $t\in [0, \tau_{\max})$. Let $(t_n)_{n\in\N}\subset[0,\tau_{\max})$ such that $t_n\rightarrow \tau_{\max}$ as $n\rightarrow \infty$. Analogously to the proof of Theorem $\ref{existence1}$, it is shown that 	
	$$\int_{0}^{t_n}(t_n - s)^{\alpha\beta-1}\|\tilde{R}_{\alpha}(t_n-s)f(u(s))-\tilde{R}_{\alpha}(\tau_{\max}-s)f(u(s))\|_{X^{1}_{q}}ds\to0,$$
	as $n\rightarrow \infty$, 
	where $\tilde{R}_{\alpha}(t):= t^{1-\alpha\beta}R_{\alpha}(t)$, $t>0$. 		Suppose $t_n<t_m<\tau_{\max}$. Then,
	\begin{eqnarray*}
		\|u(t_n)-u(t_m)\|_{X^{1}_{q}}&\leq& \|E_{\alpha}(t_n)u_0-E_{\alpha}(t_m)u_0\|_{X^{1}_{q}}\\
		&+& \|S_{\alpha}(t_n)u_1-S_{\alpha}(t_m)u_1\|_{X^{1}_{q}}\\                              
		&+&\int_{0}^{t_n}\|[R_{\alpha}(t_n-s)-R_{\alpha}(t_m-s)]f(u(s))\|_{X^{1}_{q}}ds\\
		&+& \int_{t_n}^{t_m}\|R_{\alpha}(t_m-s)f(u(s))\|_{X^{1}_{q}}ds.
	\end{eqnarray*}
	By strong continuity of $(E_{\alpha}(t))_{t\ge0}$ and $(S_{\alpha}(t))_{t\ge0}$, we have that the two first terms of the right side in the above inequality go to zero as $m, n\rightarrow\infty.$ On the other hand
	\begin{eqnarray*}
		\int_{t_n}^{t_m}\|R_{\alpha}(t_m-s)f(u(s))\|_{X^{1}_{q}}ds &\leq& M \int_{t_n}^{t_m}(t_m-s)^{\alpha\beta-1}\|f(u(s))\|_{X_{\beta}}ds\\
		&\leq& \frac{MC}{\alpha\beta}k^{\rho}(t_m-t_n)^{\alpha\beta}\rightarrow 0,
	\end{eqnarray*}
	as $m,n\rightarrow\infty$. Furthermore, set
	
	$$I=\int_{0}^{t_n}\|[R_{\alpha}(t_n-s)-R_{\alpha}(t_m-s)]f(u(s))\|_{X^{1}_{q}}ds.$$
	Then
	\begin{eqnarray*}
		&&	I \leq \int_{0}^{t_n}[(t_m-s)^{\alpha\beta-1}-(t_n-s)^{\alpha\beta-1}]\|\tilde{R}_{\alpha}(t_n-s)f(u(s))\|_{X^{1}_{q}}ds\\
		&+& \!\!\!\!\!\!\int_{0}^{t_n}(t_n-s)^{\alpha\beta-1}\|\tilde{R}_{\alpha}(t_m-s)f(u(s))-\tilde{R}_{\alpha}(t_n-s)f(u(s))\|_{X^{1}_{q}}ds\\
		&\leq&\frac{MC}{\alpha\beta}k^{\rho}[t_m^{\alpha\beta}-(t_m-t_n)^{\alpha\beta}-t_n^{\alpha\beta}]
	\end{eqnarray*}
	$$+ \int_{0}^{t_n}(t_n - s)^{\alpha\beta-1}\|\tilde{R}_{\alpha}(t_n-s)f(u(s))-\tilde{R}_{\alpha}(\tau_{\max}-s)f(u(s))\|_{X^{1}_{q}}ds$$
	$$+ \int_{0}^{t_m}(t_m - s)^{\alpha\beta-1}\|\tilde{R}_{\alpha}(t_m-s)f(u(s))-\tilde{R}_{\alpha}(\tau_{\max}-s)f(u(s))\|_{X^{1}_{q}}ds$$
	Hence, if $m,n\rightarrow \infty$ it follows that $I\to0$. This computation shows that  $\{u(t_n)\}_{n\in\mathbb{N}}$ is a Cauchy  sequence in $X^{1}_{q}$ and thus there exists $\tilde{u}\in X^{1}_{q}$ such that 
	$\lim_{n\rightarrow\infty}\|u(t_n)-\tilde{u}\|_{X^{1}_{q}}=0.$
	With this, we can extend $u$ to $[0,\tau_{\max}]$ obtaining the equality  
	$$u(t)=E_{\alpha}(t)u_0+S_{\alpha}(t)u_0+ \int_{0}^{t}R_{\alpha}(t-s)f(u(s))ds$$
	for all $t\in[0,\tau_{\max}]$. By Theorem $\ref{continuation}$, we can extend the solution to a bigger interval, which is a contradiction with the maximality of  $\tau_{\max}$.
\end{proof}

We close this work with some illustrations of our main results. Consider the particular problem 
\begin{eqnarray}\label{exwave}	 					
	\left\{\begin{array}{lrr}
		\partial_t^{\alpha}u  = \Delta u  + u\sqrt{ |u|}, \ \mbox{in}\ (0,\infty)\times\Omega,\\	
		u  =  0, \ \mbox{in} \  [0,\infty)\times \partial\Omega,\\
		u(0,x)=u_0(x),\ u_t(0,x)=u_1(x),\ \mbox{in}\ \ \Omega,\\
	\end{array} \right.	
\end{eqnarray}
where $\Omega \subset\mathbb{R}^3$ is a bounded smooth domain.  We apply our main results in the $L^2(\Omega)$-setting.  In this situation, we can choose the derivation order $\alpha$ as any real number in $(1,2)$. Indeed, it follows from Theorems \ref{existence1}, \ref{continuation} and \ref{Blow} that, for any initial data $u_0,u_1\in X^{1}_{2}=L^2(\Omega)$, there exists a unique mild solution $u\in C([0,\tau_{\max});X^{1}_{2})$ to problem \eqref{exwave}, given by
$$u(t,x)= E_{\alpha}(t)u_0(x)+S_{\alpha}(t )u_1(x)+\int_{0}^{t}R_{\alpha}(t-s)u(s,x)\sqrt{|u(s,x)|}ds, $$
and 
$$\limsup_{t\to\tau_{\max}^{-}}\int_{\Omega}|u(t,x)|^2dx=+\infty,$$ %\|u(t,\cdot)\|_{L^2(\Omega)}
if $\tau_{\max}<+\infty$. Furthermore, for all $\tau<\tau_{\max}$ and $0\le \theta <\frac{5}{8}$ it follows that $u\in C([0,\tau];X^{1+\theta}_{2})$, and if $\theta>0$, then 
$$\lim_{t\to0^{+}}\|u(t,\cdot)\|_{X^{1+\theta}_{2}}=0.$$
Finally, this solution depends continuously from the initial data.

\noindent{\bf Statements and Declarations:} 
\begin{itemize}
	\item Bruno de Andrade is partially supported by CNPQ/Brazil under grant 310384/2022-2.
	%\item The authors have no relevant financial or non-financial interests to disclose.
	%\item Data availability: not applicable.
\end{itemize}
			
			% BibTeX users please use one of
			%\bibliographystyle{spbasic}      % basic style, author-year citations
			%\bibliographystyle{spmpsci}      % mathematics and physical sciences
			%\bibliographystyle{spphys}       % APS-like style for physics
			%\bibliography{}   % name your BibTeX data base
			%\section*{References}
			% Non-BibTeX users please use

		\end{document}